\numberwithin{equation}{section}
\newcommand{\CC}{\mathbb{C}}
\newcommand{\PP}{\mathbb{P}}
\newcommand{\bB}{\mathbf{B}}
\newcommand{\bM}{\mathbf{M}}
\newcommand{\cal}{\mathcal}
\def\cF{{\cal F}}
\def\cO{{\cal O}}
\newcommand{\ses}[3]{0\lr{#1}\lr{#2}\lr{#3}\lr 0}
\def\lr{\rightarrow}
\DeclareMathOperator{\Ext}{Ext} 
\DeclareMathOperator{\Hom}{Hom} 
\newtheorem{prop}{Proposition}[section]
\newtheorem{theo}[prop]{Theorem}
\newtheorem{lem}[prop]{Lemma}
\newtheorem{coro}[prop]{Corollary}
\theoremstyle{definition}
\newtheorem{exam}[prop]{Example}
\newtheorem{defi}[prop]{Definition}
\newtheorem{rema}[prop]{Remark}
\newcommand{\opt}{\mathcal{O}_{\mathbb{P}^2}}
\newcommand{\pt}{{\mathbb{P}^2}}
\newcommand{\pp}{\mathbb{P}}
\newcommand{\ds}[1][\chi]{\mathcal{C}_{d,#1}}
\def\mapright#1{\,\smash{\mathop{\rightarrow}\limits^{#1}}\,}
\title{Cohomology bounds for sheaves of dimension one}
\author{Jinwon Choi}
\address{Department of Mathematics, University of Illinois at Urbana-Champaign, 1409 E Green St., Urbana, IL 61801, United States}
\email{choi29@illinois.edu}
\author{Kiryong Chung}
\address{School of Mathematics, Korea Institute for Advanced Study, Seoul 130-722, Korea}
\email{krjung@kias.re.kr}
\keywords{Spectrum of Sheaves, Semistable Sheaves, Wall-Crossing}
\subjclass[2010]{14D22.}
\begin{document}
\begin{abstract}
 We find the sharp bounds on $h^0(F)$ for one-dimensional semistable sheaves $F$ on a projective variety $X$ by using the spectrum of semistable sheaves. The result generalizes the Clifford theorem. When $X$ is the projective plane $\mathbb{P}^2$, we study the stratification of the moduli space by the spectrum of sheaves. We show that the deepest stratum is isomorphic to a subscheme of a relative Hilbert scheme. This provides an example of a family of semistable sheaves having the biggest dimensional global section space.
\end{abstract}

\maketitle

\section{Introduction and the results of the paper}
\subsection{Motivations and the main theorem}
In the study of moduli spaces of semistable sheaves on the projective variety, it is useful to know the upper bounds on the dimensions of the cohomology groups of the semistable sheaves with fixed Hilbert polynomial.
This is essential for a classification of semistable sheaves with respect to the cohomological conditions (For example, see \cite{maican,maican1,maican2} and \cite{fredu}). It is also very helpful for analyzing the forgetting map from the moduli space of pairs to that of semistable sheaves (For definitions and examples, see \cite[\S4]{kiryong}).

Historically, C. Simpson constructed the moduli spaces of semistable sheaves as a compactification of the moduli space of Higgs bundles on a variety (\cite{simp}). The moduli space of Higgs bundles has also been studied by algebraic geometers and physicists regarding Hamitonian systems etc. As a natural generalization, we may consider \emph{twisted} Higgs bundles by using a general line bundle instead of the cotangent bundle. The moduli spaces of twisted Higgs bundles has been studied widely for its geometric structure (\cite{rayan,moz}). In this paper, we are interested in cohomology bounds for semistable sheaves on a projective variety. It turns out that the global section space has the maximal dimension if the sheaf is a plane sheaf, which can be identified with a twisted Higgs bundle on $\PP^1$ twisted by $\cO(1)$.

The \emph{spectrum} of a one-dimensional sheaf on $\PP^2$ is the sequence of degrees in a decomposition of the corresponding Higgs bundle into line bundles on $\PP^1$ (Definition \ref{defofspectrum}). We study the stratification of the moduli space of semistable sheaves with respect to the spectrum. By classifying all possible spectra, we prove a conjecture on the cohomology bounds of the sheaves suggested in \cite{maican}. More precisely, let $\bM(d,\chi)$ be the moduli space of semistable sheaves on $\PP^2$ with Hilbert polynomial $dm+\chi$.
Let $g(d):= \frac{(d-1)(d-2)}{2}$ be the arithmetic genus of the degree $d$ plane curve. We prove that

\begin{theo}\label{maintheorem}
Let $F$ be a semistable sheaf in $\bM(d,\chi)$.
\begin{enumerate}
  \item If $\chi\ge g(d)$, then $h^0(F)=\chi$.

  \item Suppose $\chi<g(d)$ and write $\chi+\frac{d(d-3)}{2}=kd+r$ with $0\le r< d$. Then,
\begin{equation}\label{eq:max} h^0(F)\le \frac{(k+2)(k+1)}{2} +\mathrm{max}\{0, k-d+r+2\}.\end{equation}
Furthermore, there are families of semistable sheaves in $\bM(d,\chi)$ for which the equality holds in \eqref{eq:max}.
\end{enumerate}
\end{theo}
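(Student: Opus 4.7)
The plan is to convert Theorem~\ref{maintheorem} into a discrete optimization problem via the spectrum of $F$, and then construct sheaves realising the extremum by hand. For $F\in\bM(d,\chi)$, let $\pi\colon\mathrm{supp}(F)\to\PP^1$ denote the restriction of a generic linear projection from a point outside $\mathrm{supp}(F)$. Then $\pi$ is a finite morphism of degree $d$, so $\pi_*F$ is a rank $d$ vector bundle on $\PP^1$; by Grothendieck--Birkhoff it splits as $\bigoplus_{i=1}^d\cO_{\PP^1}(n_i)$ with $n_1\ge\cdots\ge n_d$, and the finiteness of $\pi$ gives $h^j(F)=h^j(\pi_*F)$ for all $j$. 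This yields the dictionary
\[
h^0(F)=\sum_{i=1}^d\max(n_i+1,0),\qquad \sum_{i=1}^d(n_i+1)=\chi.
\]
By Definition~\ref{defofspectrum}, the semistability of $F$ is captured by the \emph{connectedness} condition $n_i-n_{i+1}\in\{0,1\}$, i.e.\ $\{n_i\}$ is an interval of consecutive integers.

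For Part~(1), I claim that $\chi\ge g(d)$ forces $n_d\ge -1$, in which case every summand $\max(n_i+1,0)$ collapses to $n_i+1$ and hence $h^0(F)=\chi$. Suppose instead $n_d\le -2$. Connectedness forces the spectrum to contain every integer of $[n_d,n_1]$, and a short maximization shows that among all connected length-$d$ spectra with $n_d\le-2$, the largest value of $\chi=d+\sum n_i$ is $d(d-3)/2=g(d)-1$, attained by the full staircase $(d-3,d-4,\ldots,-2)$. This contradicts $\chi\ge g(d)$.

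For the upper bound in Part~(2), write $\chi+d(d-3)/2=kd+r$ with $0\le r<d$. The baseline staircase $(k,k-1,\ldots,k-d+1)$ has Euler characteristic $\chi_0:=kd-d(d-3)/2=\chi-r$, so one must absorb the excess $r$ into a connected modification. Writing $m_j$ for the multiplicity of $j$ in the spectrum and setting $h^1(F)=\sum_{j\le-2}(-j-1)\,m_j$, one has $h^0(F)=\chi+h^1(F)$, and maximizing $h^1(F)$ over connected length-$d$ spectra with $\sum m_j=d$ and $\sum jm_j=\chi-d$ becomes a short extremal problem: the maximum is realised by the baseline staircase when $r\le d-k-2$, and by the staircase with an extra copy of the top value $k$ (which pushes the negative tail one step deeper) when $r>d-k-2$. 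Adding the contributions yields the closed form $\binom{k+2}{2}+\max\{0,k-d+r+2\}$.

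To prove sharpness, the natural candidates are $F=\cO_C(k)\otimes I_{Z/C}$ for a plane curve $C\subset\PP^2$ of degree $d$ and a general zero-dimensional $Z\subset C$ of the prescribed colength; in the boundary regime $r>d-k-2$ one refines the construction by taking $C=L\cup C'$ a line plus a residual degree $d-1$ curve, so that $\pi_*F$ acquires the spike at $k$ demanded by the extremal spectrum. Semistability is verified by a slope argument on the support. The main obstacle is the optimization in Part~(2): the two regimes $r\le d-k-2$ and $r>d-k-2$ must be treated separately, and one must check that every extremal connected spectrum actually arises from a semistable sheaf on $\PP^2$; the latter, especially in the reducible regime, requires ruling out destabilizing subsheaves coming from individual components of $C$.
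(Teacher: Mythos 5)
Your overall strategy is the same as the paper's: push $F$ forward under a projection to $\PP^1$, read $h^0$ off the splitting type, impose the balanced (``connectedness'') constraint coming from semistability, and optimize. Your part (1) is fine as an alternative to the paper's Serre-duality proof of Lemma \ref{lem:h1vanishing} (the paper itself records this variant at the end of its proof of part (2)). But two steps in part (2) have genuine problems. First, the balanced property is not part of Definition \ref{defofspectrum}; it is Proposition \ref{balanced}(2), and it is exactly where semistability enters (a gap $a_j-a_{j+1}\ge 2$ produces a sub-Higgs-bundle $G'\subset\pi_*F$ with $\Hom(G',G''(1))=0$, hence a destabilizing subsheaf of $F$). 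You assert it without proof. Second, the extremal problem you defer is the actual content of the bound, and your description of the optimizer is wrong: for fixed $d,\chi$ with $0<r<d$, the constraint $\sum a_i=\chi-d$ forces the unique balanced spectrum with at most one repeated value to be
\[
[k,\,k-1,\,\dots,\,k-d+r+1,\,k-d+r+1,\,\dots,\,k-d+2],
\]
i.e.\ the repeat sits at the value $k-d+r+1$ (position $j=d-r$), not at the top value $k$; ``an extra copy of $k$'' is only consistent with $r=d-1$. The paper's Lemma \ref{lem:maxspectrum} settles the optimization by an exchange argument (replacing a pair $a_j,a_{j'}$ by $a_j+1,a_{j'}-1$ to eliminate multiple repeats without decreasing $h^0$), and some such argument still needs to be supplied.

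The sharpness construction is also incorrect as stated. The sheaf $\cO_C(k)\otimes I_{Z/C}$ has Euler characteristic $\chi(\cO_C(k))-\ell(Z)=\chi-r-\ell(Z)$, so it lies in $\bM(d,\chi)$ only when $Z=\emptyset$ and $r=0$; the correct normalization is $I_{Z/C}(k+1)$ with $\ell(Z)=d-r$, as in Example \ref{example1}. More importantly, a \emph{general} $Z$ lands in the generic stratum and fails to maximize $h^0$: when $d-r\ge k+2$ (i.e.\ $r\le d-k-2$), $d-r$ general points impose independent conditions on $|\cO_C(k+1)|$ and give $h^0=\binom{k+3}{2}-(d-r)=\binom{k+2}{2}+(k-d+r+2)$, which is strictly below the bound $\binom{k+2}{2}$ for $r<d-k-2$. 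To achieve equality one must take $Z$ \emph{colinear}, so that every degree-$(k+1)$ curve through $Z$ contains the line; this is precisely the description of the deepest stratum in Propositions \ref{prop:dsodd} and \ref{prop:dseven}, and the paper's Example \ref{example1} realizes it equivariantly on the $d$-fold thickened line. You also have the two regimes reversed: it is for $r>d-k-2$ that a general $Z$ on an irreducible curve (where stability of a rank-one torsion-free sheaf is automatic) already attains the bound, while for $r\le d-k-2$ the subscheme $Z$ must be special.
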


\begin{rema}
Theorem \ref{maintheorem} is a generalization of R. Hartshorne's result \cite{har}. He proved the same bounds for sheaves supported on a irreducible plane curve by using induction on the degrees of the curves. In this paper, we give another proof which can easily be generalized to a general projective variety. It is interesting that there exist sheaves supported on irreducible curves with maximal global section space.
\end{rema}

The proof of Theorem \ref{maintheorem} goes as follows. Part (1) is established in Lemma \ref{lem:h1vanishing}.
For part (2), we associate a semistable Higgs bundle $(G,G \mapright{\phi} G(1))$ on $\PP^1$ (in the sense of $\cO_{\PP^1}(1)$-twisting)
with a semistable sheaf $F$ on $\PP^2$, where $G=\pi_*F$ is the direct image sheaf of $F$ by the projection $\pi:\PP^2-\{a\}\lr \PP^1$ from a point $a$ $(\notin \mbox{Supp}(F)$).
Since $F$ is pure, $G$ is locally free and hence it decomposes into a direct sum of line bundles on $\PP^1$. For the sheaf $F$ to be semistable, the degrees of these line bundles must satisfy certain numerical conditions (Proposition \ref{balanced}), which enables us to determine the upper bounds on $h^0(F)$ by a combinatorial reasoning (Lemma \ref{lem:maxspectrum}).

The same technique can be applied to a general projective variety. Let $X\subset \PP^r$ be a projective variety with a fixed polarization $\cO_X(1)$.
We consider a projection from an $(r-2)$-dimensional hyperplane away from the support of a semistable sheaf $F$. Similarly as before, we may define the \emph{generalized spectrum} of $F$.
By studying its combinatorial property, Theorem \ref{maintheorem} can easily be generalized to $X$. For detail, see \S\ref{sec:general}.
\begin{theo}\label{mainthm2}
Let $F$ be a semistable sheaf on a projective variety $X$ with Hilbert polynomial $dm+\chi$. Then the dimension $h^0(F)$ has the same upper bound as in the case of $\PP^2$.
\end{theo}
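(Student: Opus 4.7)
The plan is to replicate the $\PP^2$ argument with the point projection replaced by a linear projection from a codimension-$2$ subspace, and to reuse the combinatorial maximization already carried out in Lemma \ref{lem:maxspectrum}.

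First, since $\mathrm{Supp}(F)$ has dimension one and $r\ge 2$, a generic linear subspace $L\subset\PP^r$ of dimension $r-2$ is disjoint from $\mathrm{Supp}(F)$. The associated projection $\pi:\PP^r\setminus L\to\PP^1$ restricts to a finite morphism on $\mathrm{Supp}(F)$ of degree $d$ (the leading coefficient of $dm+\chi$). Finiteness yields $R^i\pi_*F=0$ for $i>0$, so
\[
H^i(X,F)\;=\;H^i(\PP^1,\pi_*F)\qquad\text{for all }i.
\]
Set $G:=\pi_*F$. Purity of $F$ together with the smoothness of $\PP^1$ forces $G$ to be torsion free, hence locally free of rank $d$, and so
\[
G\;\cong\;\bigoplus_{i=1}^{d}\cO_{\PP^1}(e_i),\qquad e_1\ge e_2\ge\dots\ge e_d.
\]
I call $(e_1,\ldots,e_d)$ the \emph{generalized spectrum} of $F$. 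Counting Euler characteristics gives $\sum_i(e_i+1)=\chi$, and
\[
h^0(X,F)\;=\;h^0(\PP^1,G)\;=\;\sum_{i=1}^d\max(e_i+1,\,0).
\]

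The heart of the argument, and the principal obstacle, is to prove that the semistability of $F$ forces the same numerical balance on $(e_1,\ldots,e_d)$ as in Proposition \ref{balanced}. The mechanism is a Higgs-bundle-type structure on $G$ coming from the finite $\cO_{\PP^1}$-algebra $\pi_*\cO_{\mathrm{Supp}(F)}$: any subsheaf $F'\subset F$ pushes forward to a subbundle of $G$ invariant under this module action, and conversely every invariant subbundle descends to such an $F'$. The slope inequality $\chi(F')/d'\le\chi/d$ then translates into the analogous slope inequality for invariant subbundles of $G$, which forces the balance on the $e_i$. Verifying that the inequalities one obtains are sharp enough to reproduce Proposition \ref{balanced} — rather than some weaker consequence — is where care is needed; it should go through because the twisting sheaf on $\PP^1$ appearing in the Higgs-like structure, arising from hyperplane sections of $\PP^r$ containing $L$, is still $\cO_{\PP^1}(1)$.

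Once the balance is in place, maximizing $\sum_i\max(e_i+1,0)$ over all balanced spectra of length $d$ with total Euler characteristic $\chi$ is the same purely combinatorial problem solved by Lemma \ref{lem:maxspectrum}, and its answer is exactly the bound \eqref{eq:max} (respectively $\chi$ when $\chi\ge g(d)$, with $h^1$-vanishing obtained from positivity of all $e_i$ in that regime). This finishes the proof.
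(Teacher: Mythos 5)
Your proposal follows the paper's proof essentially verbatim: project from a generic $(r-2)$-plane disjoint from the support, push forward to a rank-$d$ locally free sheaf on $\PP^1$ carrying a $\mathrm{Sym}\,\cO_{\PP^1}(-1)^{\oplus(r-1)}$-module structure, translate semistability into the balanced condition of Proposition \ref{balanced} via the correspondence between invariant subbundles and subsheaves, and finish with the combinatorics of Lemma \ref{lem:maxspectrum}. The single step you flag as ``where care is needed'' is exactly what the paper writes out: if $a_j-a_{j+1}\ge 2$, the top summand $G'$ satisfies $\Hom(G',G''(1))=0$, hence is automatically a submodule and descends to a destabilizing subsheaf of $F$.
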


As a corollary, we prove the ``generalized Clifford theorem'' (Corollary \ref{conj1}) which is a generalization of a conjecture in \cite[\S1.4]{maican}.

When $X=\PP^2$, we also study the stratification of the moduli space by the spectrum of sheaves (\S\ref{sec:deep}). It turns out that our choice of the spectrum that gives the upper bounds on $h^0(F)$ corresponds to the deepest stratum in this stratification, in the sense that it has the biggest codimension among all strata. We identify this stratum with a subscheme of a relative Hilbert scheme. In particular, this proves the bounds in Theorem \ref{maintheorem} are sharp bounds. However, we do not know whether the upper bounds in Theorem \ref{maintheorem} are achieved or not for a general $X$ not containing a projective plane $\PP^2$.

\medskip
\textbf{Acknowledgement.}
We would like to thank Sheldon Katz and Young-Hoon Kiem for helpful discussions and comments. Specially, S. Katz's comments on Proposition \ref{katz} are very helpful. We would also like to thank JiUng Choi
for constructing a computer program to find all \emph{maximizing spectra}.

\section{The proof of Theorem \ref{maintheorem}}\label{sec2}
In this section, we prove the cohomology bounds for plane sheaves. The following is from \cite[Lemma 4.2.4]{choi}.
\begin{lem}\label{lem:h1vanishing}
Let $F$ be a semistable sheaf on $\pt$ with Hilbert polynomial $dm+\chi$. Then $H^1(F)=0$ if $\chi \ge g(d)$.
\end{lem}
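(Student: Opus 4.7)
The plan is to dualize the vanishing statement and reduce to proving $h^0 = 0$ for a semistable sheaf with sufficiently negative Euler characteristic, then derive a contradiction from Simpson semistability. First, since $F$ is pure one-dimensional (semistability implies purity) on the smooth surface $\PP^2$, Serre duality together with the degeneration of the local-to-global spectral sequence (using $\mathcal{H}om(F,\omega_{\PP^2}) = 0$) give
\[ h^1(F) \;=\; \dim \Ext^1(F,\omega_{\PP^2}) \;=\; h^0(F^D), \]
where $F^D := \mathcal{E}xt^1(F,\omega_{\PP^2})$. Standard facts (e.g.\ Huybrechts--Lehn \S1.1) yield that $F^D$ is again pure one-dimensional and semistable with Hilbert polynomial $dm - \chi$, so $\chi(F^D) = -\chi \leq -g(d)$. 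Thus it suffices to show: for any semistable one-dimensional sheaf $G$ on $\PP^2$ of degree $d$ with $\chi(G) \leq -g(d)$, one has $h^0(G) = 0$.

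For this, I would argue by contradiction. Suppose $s:\cO_{\PP^2}\to G$ is a nonzero section, and let $I \subset G$ be its image. Purity of $G$ forces $I$ to be pure one-dimensional of some degree $e$ with $1\le e\le d$. Being a one-dimensional cyclic quotient of $\cO_{\PP^2}$, we may write $I = \cO_Z$ for some one-dimensional subscheme $Z\subset\PP^2$. The crux is the estimate
\[ \chi(I) \;\ge\; 1 - g(e). \]
This uses smoothness of $\PP^2$: the pure one-dimensional part $Z_{\mathrm{pure}}$ of $Z$ is a Cartier divisor of degree $e$, hence $\chi(\cO_{Z_{\mathrm{pure}}}) = 1 - g(e)$; and the surjection $\cO_Z\twoheadrightarrow\cO_{Z_{\mathrm{pure}}}$ has zero-dimensional kernel of nonnegative Euler characteristic.

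Combining the estimate with the Simpson semistability inequality $\chi(I)/e \le \chi(G)/d \le -g(d)/d$ and clearing denominators yields
\[ d \;\le\; d\,g(e) - e\,g(d) \;=\; \frac{(d-e)(2-de)}{2}. \]
For any integers $1\le e\le d$ the right-hand side is nonpositive (it equals $0$ when $e=d$ or when $d=e=1$, and is strictly negative for $e<d$ with $de\geq 2$), contradicting $d\geq 1$. I expect the only subtle step to be the local estimate $\chi(I)\ge 1 - g(e)$, which genuinely uses smoothness of $\PP^2$ so that pure one-dimensional subschemes are Cartier divisors; everything else is standard Serre duality and an elementary arithmetic inequality. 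A direct argument via the spectrum introduced later in the paper is also possible, but this duality approach is shorter and adapts well to other smooth projective surfaces.
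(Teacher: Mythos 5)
Your argument is correct; I see no gap. It is, however, the global dual of the paper's proof rather than the same computation. The paper applies Serre duality on the degree-$e$ supporting curve $C$ (Cohen--Macaulay, with $\omega_C\simeq\cO_C(e-3)$ by adjunction) to identify $H^1(F)^*$ with $\Hom(F,\cO_C(e-3))$; a nonzero such map then forces $\frac{\chi}{d}=\mu(F)\le\mu(\cO_C(e-3))=\frac{e-3}{2}\le\frac{d-3}{2}$, which is impossible once $\chi\ge g(d)$. You dualize on the ambient plane instead: $h^1(F)=h^0(F^D)$, and a nonzero section of $F^D$ gives $\cO_Z\hookrightarrow F^D$ with $\chi(\cO_Z)\ge 1-g(e)$; your final inequality $d\le\frac{(d-e)(2-de)}{2}\le 0$ is exactly the paper's slope contradiction in dual form (note $\frac{1-g(e)}{e}=-\frac{e-3}{2}$). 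What your route buys: all duality happens on the smooth surface $\PP^2$, so you never invoke Serre duality or adjunction on a possibly non-reduced support curve, and the argument transfers verbatim to sheaves on other smooth surfaces. What it costs: you must import that $F^D$ is again pure and semistable with $\chi(F^D)=-\chi$ (standard, but it lives in Huybrechts--Lehn \S 1.2--1.3 and Maican's duality paper rather than \S 1.1, and the paper's direct argument does not need it), together with the fact that a pure one-dimensional subscheme of a smooth surface is an effective Cartier divisor (true: an unmixed height-one ideal in a two-dimensional regular local ring is reflexive of rank one, hence principal). Both imported facts are correct, so the proof stands; it is just marginally less self-contained than the two-line slope comparison in the paper.
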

\begin{proof}
We know that $F$ is supported on some degree $e$ Cohen-Macaulay curve $C$ in $\pt$ where $1\le e\le d$. By adjunction formula, we have $\omega_C\simeq \cO_C(e-3)$. By applying Serre duality on $C$, we have $H^1(F)^*\simeq \Hom(F,\cO_C(e-3))$. Suppose there is a nonzero map: $F\to \cO_C(e-3)$. Then, by semistability of $F$ and $\cO_C$, we have
\begin{equation}\label{eq:ce3}\mu(F)\le \mu(\cO_C(e-3)).\end{equation}
Since the Hilbert polynomial $\chi(\cO_C(e-3)(m))=em+\frac{e(e-3)}{2}$, we have $\mu(\cO_C(e-3))=\frac{e-3}{2}.$

Then by \eqref{eq:ce3},
\[ \frac{\chi}{d} \le \frac{e-3}{2} \le \frac{d-3}{2}.\]
Therefore, if $\chi \ge \frac{d(d-3)}{2}+1= \frac{(d-1)(d-2)}{2}$, then $H^1(F)=0$.
\end{proof}
To prove main theorem, let us recall the notion of the \emph{spectrum} of a pure sheaf on $\PP^2$ (\cite{lepot1}).
Let $\pi:\PP^2-\{a\} \lr \PP^1$ be the projection map from a point $a\in \PP^2$. For a pure sheaf $F$ with $a \notin \mbox{Supp}(F)$,
the direct image sheaf $G:=\pi_*F$ is a locally free sheaf on $\PP^1$.
Note that $\PP^2\setminus\{a\}=\mbox{Tot}(\cO_{\PP^1}(1))=\mathrm{Spec(Sym}\cO_{\PP^1}(-1))$ and $\pi$ is the affine morphism $\mathrm{Spec(Sym}\cO_{\PP^1}(-1))\to \PP^1 $. Hence, the locally free sheaf $G$ on $\PP^1$ has a natural $\cO_{\PP^1}(-1)$-module structure or equivalently, it admits a sheaf homomorphism $\phi: G\lr G(1)$.

\begin{rema}
The pair
$
(G, G\mapright{\phi} G(1))
$
is a \emph{twisted Higgs bundle} on $\PP^1$. In \cite{lepot1}, it is shown that a sheaf is (semi)stable if and only if the associated twisted Higgs bundle is (semi)stable. Let $U_a$ be the open subscheme of $\bM(d,\chi)$ consisting of sheaves whose support does not pass through a fixed point $a\in \PP^2$ (cf. \cite{lepot1}). It follows that $U_a$ is isomorphic to the moduli space of twisted Higgs bundles.
\end{rema}

\begin{defi}\label{defofspectrum}
Under the above notations and assumptions, $$G=\pi_*F\cong\oplus_{i=1}^{d} \cO_{\pp^1}(a_i)$$ is a locally free sheaf on $\PP^1$ of rank $d$ if the Hilbert polynomial of $F$ is $\chi(F(m))=dm+\chi$. Let us define the degree sequence $$v=[a_1,a_2,\cdots,a_d]$$where $a_1\ge \cdots\ge a_d$ to be the \emph{spectrum} of the sheaf $F$.
\end{defi}

\begin{prop}\label{balanced}
Let $v=[a_1,a_2,\cdots,a_d]$ be the spectrum of a semistable sheaf $F$ in $\bM(d,\chi)$. Then it must satisfy two conditions:
\begin{enumerate}
  \item $\sum_{i=1}^{d}a_i=\chi-d$ and
  \item $a_j-a_{j+1} \le 1$ for all $1\leq j\leq d-1$ (\emph{Balanced property}).
\end{enumerate}
\end{prop}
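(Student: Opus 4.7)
My plan is to establish (1) by a direct Euler-characteristic computation and (2) by exhibiting an explicit destabilizing $\phi$-invariant subbundle whenever the spectrum has a jump of size at least two. Throughout I use the correspondence recalled in the remark above from \cite{lepot1}: $F$ is (semi)stable on $\PP^2$ if and only if the associated twisted Higgs bundle $(G,\phi)$ is (semi)stable on $\PP^1$, meaning $\mu(G')\le\mu(G)$ for every $\phi$-invariant subbundle $G'\subset G$.

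For (1), since $\pi\colon\PP^2\setminus\{a\}\to\PP^1$ is affine and the support of $F$ avoids $a$, one has $R^i\pi_*F=0$ for $i>0$ and hence $\chi(F)=\chi(G)$. Using $G\cong\bigoplus_{i=1}^{d}\cO_{\PP^1}(a_i)$, this reads $\chi=\sum_{i=1}^{d}(a_i+1)=\sum_{i=1}^{d}a_i+d$, which rearranges to $\sum_{i=1}^{d}a_i=\chi-d$.

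For (2), I argue by contradiction: suppose $a_j-a_{j+1}\ge 2$ for some $1\le j\le d-1$ and set $G':=\bigoplus_{i=1}^{j}\cO_{\PP^1}(a_i)\subset G$, a subbundle of rank $j$. The first step is to check $\phi$-invariance of $G'$. Writing $\phi\colon G\to G(1)$ componentwise as a matrix with entries $\phi_{ki}\in H^0(\PP^1,\cO(a_k+1-a_i))$, the only potentially obstructing components are those with $i\le j<k$; but for these, $a_k+1-a_i\le a_{j+1}+1-a_j\le -1$, so the target line bundle has no nonzero global sections and $\phi_{ki}=0$. The second step is a slope comparison. Every summand of $G'$ has degree $\ge a_j$, while every summand of the quotient $G/G'$ has degree $\le a_{j+1}\le a_j-2$, so $\mu(G/G')<\mu(G')$. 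Because $\mu(G)$ is the convex combination $(j/d)\mu(G')+((d-j)/d)\mu(G/G')$ with strictly positive weights, $\mu(G')>\mu(G)$, contradicting semistability of $(G,\phi)$ and hence of $F$.

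The only nontrivial step is the $\phi$-invariance verification, but it is essentially automatic from the degree inequality: a gap of size at least two in the spectrum forces the Hom line bundles between the ``high'' and ``low'' blocks of the splitting to have strictly negative degree, so $\phi$ has no choice but to preserve the filtration at that level. Once invariance is in place, the slope comparison is forced by the balanced structure of the averages, and the reduction from $F$ to $(G,\phi)$ via \cite{lepot1} closes the argument.
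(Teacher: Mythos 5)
Your proof is correct and follows essentially the same route as the paper: part (1) is the same Euler-characteristic computation the paper treats as obvious, and part (2) is exactly the argument the paper defers to Le Potier and to its own \S\ref{sec:general}, namely splitting $G=G'\oplus G''$ at the gap, observing $\Hom(G',G''(1))=0$ so that $G'$ is $\phi$-invariant, and deriving a slope contradiction. Your write-up is merely more explicit (componentwise matrix of $\phi$, convex-combination slope comparison) than the paper's citation-style proof, but the underlying idea is identical.
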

\begin{proof}
Part (1) is obvious from the condition $\chi(F)=\chi$. Part (2) is equivalent to the semistability of $F$, see the proof of \cite[Lemma 3.12]{lepot1}. We will prove this again in more general situation in \S\ref{sec:general}.
\end{proof}

Note that if $\pi_*F=\oplus_{i=1}^{d} \cO_{\pp^1}(a_i)$, $$h^0(F):=\mbox{dim}H^0(\PP^2,F)= \sum_{a_k\geq 0}(a_k+1)$$ since the map $\pi$ is an affine map.
\begin{rema}\label{rem1}
To determine $[a_1,\cdots, a_d]$, it is enough to know $k\mapsto h^0(F(k))$ or equivalently $k\mapsto h^1(F(k))$. In particular, the spectrum of a sheaf is independent of the choice of the center of the projection $\pi$.
\end{rema}

\begin{exam}\label{lem:dualspectrum}
\begin{enumerate}
  \item   Let $C$ be a curve of degree $d$ in $\PP^2$. Then the spectrum of the structure sheaf $\cO_C$ is
  \[ [0,-1, -2,\cdots, 1-d].\]
Indeed, for $0\le k \le d-1$, we have $h^0(\cO_C(k))=\frac{(k+1)(k+2)}{2}$ and the above spectrum is the unique one satisfying this condition.
\item  Let $[a_1,\cdots, a_d]$ be the spectrum of a semistable sheaf $F$ with multiplicity $d$. Then the spectrum of $F(k)$ is  $[a_1+k, \cdots, a_d+k]$ and the spectrum of $F^D:=\mathcal{E} xt^1(F,\omega_{\PP^2})$ is $[-2-a_d, \cdots, -2-a_1]$. In fact, the spectrum of $F(k)$ is straightforward by the projection formula $\pi_*F(k)=\pi_*F\otimes \cO_{\PP^1}(k)$.
  For $F^D$, by \cite[Proposition 5]{maicandual} or \cite[Proposition 4.2.8]{choi}, we have $h^0(F^D(-k))=h^1(F(k))$ for any integer $k$.
  By a straightforward induction, one can prove the claim.
\item If a spectrum $v$ satisfies the conditions in Proposition \ref{balanced}, there exists a semistable sheaf whose spectrum is $v$. In fact, we can explicitly construct examples of \emph{torus equivariant} semistable sheaves in each spectrum by the method of \cite{choip1}. (cf. Example \ref{example1})
\end{enumerate}
\end{exam}
\begin{lem}\label{lem:maxspectrum}
Let $[a_1,\cdots, a_d]$ be the spectrum of $F$ in $\bM(d,\chi)$. If $a_j=a_{j+1}$ for at most one $j$, then $h^0(F)$ is maximal among all sheaves in $\bM(d,\chi)$.
\end{lem}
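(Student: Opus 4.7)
The plan is to show $h^0(F') \le h^0(F)$ for every $F' \in \bM(d,\chi)$, so that the canonical spectrum of $F$ (with at most one flat) realizes the maximum.

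Since $\pi$ is affine and $\pi_*F = \bigoplus_{i=1}^d \cO_{\PP^1}(a_i)$, one has $h^0(F) = \sum_{i=1}^d (a_i+1)_+$ where $(x)_+ := \max(x,0)$. Because the spectrum is non-increasing, $\{i : a_i \ge 0\}$ is an initial segment $\{1,\ldots,m\}$, so with the partial sums $A_{m'}(v) := \sum_{i=1}^{m'} a_i$ one has
\[
h^0(F) \;=\; m + A_m(F) \;=\; \max_{m'}\bigl(m' + A_{m'}(F)\bigr),
\]
the last equality being a direct check using the signs of $a_i$ before and after the index $m$. Therefore, once the partial-sum (majorization) inequality $A_{m'}(v') \le A_{m'}(v)$ is established for every $m'$, one takes the maximum over $m'$ to deduce $h^0(F') \le h^0(F)$.

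The plan for the majorization is an iterative reduction on the number $|J(v')|$ of flats. If $|J(v')| \le 1$, the constraints of Proposition \ref{balanced} (fixed sum plus balancedness) force $v' = v$ by a short uniqueness check, and there is nothing to check. If $|J(v')| \ge 2$, pick a pair of flats $j_1 < j_2 \in J(v')$ and, depending on whether $j_1+j_2$ is less than, equal to, or greater than $d$, modify $v'$ respectively by: replacing $\{j_1,j_2\}$ by $\{j_1+j_2\}$ in $J$ and incrementing $a'_1$ by $1$; removing both flats and incrementing $a'_1$ by $1$; or replacing $\{j_1,j_2\}$ by $\{j_1+j_2-d\}$ while leaving $a'_1$ unchanged. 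A direct computation shows each move preserves $\sum a'_i = \chi-d$ together with the balanced property of Proposition \ref{balanced}, strictly decreases $|J|$, and yields a pointwise change $\delta_i := a''_i - a'_i$ of a signed block shape (for example, $+1$ on $[1,j_1]$, $0$ on $(j_1,j_2]$, $-1$ on $(j_2,j_1+j_2]$, $0$ on $(j_1+j_2,d]$ in the first case), whose left partial sums $\sum_{i\le m'}\delta_i$ are non-negative by inspection; this gives $A_{m'}(v'') \ge A_{m'}(v')$ for every $m'$. Iterating terminates at a valid spectrum with at most one flat, namely $v$ itself, and the majorization follows.

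The main obstacle is that the replacement flat produced by the first or third option may already lie in $J(v')$, in which case the new $J$ fails to be a set. I expect to resolve this by choosing the pair $(j_1,j_2)$ carefully: the second option ($j_1+j_2 = d$) introduces no new flat and so never conflicts, and a short combinatorial check shows that, when no such pair is available, one can still find a pair whose replacement position $j_1+j_2$ or $j_1+j_2-d$ lies outside $J$. Handling this collision issue cleanly is the one delicate point of the otherwise routine combinatorial verification.
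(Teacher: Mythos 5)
Your proposal is correct in outline and, at bottom, is the same kind of argument as the paper's: both reduce the lemma to pure combinatorics of balanced sequences and reach the at-most-one-flat spectrum by local moves that preserve $\sum a_i$ and the balanced property while never decreasing $h^0$. The differences are in the bookkeeping. The paper argues by contradiction with two moves that change exactly two entries by $\pm 1$ (first breaking up any value repeated three or more times, then pushing apart the two closest flats), and terminates via a minimality argument on $j_2-j_1$; you instead prove the stronger partial-sum majorization $A_{m'}(v')\le A_{m'}(v)$ for all $m'$, using moves that merge two flats into one and induct on the number of flats $|J|$. Your identity $h^0(F)=\max_{m'}(m'+A_{m'})$ is correct and makes the monotonicity of $h^0$ under the moves transparent, and $|J|$ is a cleaner terminating measure than the paper's minimality argument; the price is the block-shaped $\delta$ computations and the collision issue, neither of which arises in the paper's version.

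The collision issue you flag is real but is closed by a one-line choice: always take $j_1=\min J$ and $j_2=\max J$. If $j_1+j_2<d$ then $j_1+j_2>\max J$, so the new flat position is not in $J$; if $j_1+j_2=d$ no flat is created; and if $j_1+j_2>d$ then $j_1+j_2-d<j_1=\min J$ (since $j_2\le d-1$), so again the new position avoids $J$. With that choice each of your three moves lands on a genuine subset $J''\subseteq\{1,\dots,d-1\}$, hence on a valid balanced spectrum, and the rest of your argument (sum preservation, nonnegative partial sums of $\delta$, strict decrease of $|J|$, uniqueness of the $|J|\le 1$ spectrum) goes through as you describe. One small point worth stating explicitly: the intermediate spectra need not be realized by actual semistable sheaves, but this does not matter, since the comparison is purely combinatorial on sequences satisfying the two conditions of Proposition \ref{balanced}; the paper's proof shares this feature.
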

\begin{proof}
Suppose not. Let $[a_1,\cdots, a_d]$ be the spectrum for a sheaf $F$ in $\bM(d,\chi)$ having bigger $h^0(F)$. Suppose first that we have $a_{j-1}> a_j=a_{j+1}=\cdots=a_{j+k}>a_{j+k+1}$ for some $j$ and $k\ge 2$. Consider the spectrum $$[\cdots, a_{j-1},a_j+1, a_{j+1}, \cdots, a_{j+k-1}, a_{j+k}-1,a_{j+k+1},\cdots].$$ It is easy to see that the Euler characteristic does not change while $h^0(F)$ either increases or remains the same. Hence we may assume the same number is not repeated more than twice in $[a_1,\cdots, a_d]$.

Now we suppose that $a_j=a_{j+1}$ for at least two $j$. Take $a_{j_1}=a_{j_1+1}$ and $a_{j_2}=a_{j_2+1}$, $j_1<j_2$. We can choose $[a_1,\cdots, a_d]$ so that $j_2-j_1$ is minimal. Consider the spectrum
\[ [a_1, \cdots, a_{j_1}+1,a_{j_1+1}, \cdots,  a_{j_2},a_{j_2+1}-1, \cdots, a_d].  \]
This spectrum has the same Euler characteristic, but $h^0(F)$ either increases or remains the same. This contradicts the minimality of  $j_2-j_1$.
\end{proof}

As we will see in the following examples, when we fix $d$ and $\chi$, the spectrum of a sheaf whose global section space is maximal need not be unique.
\begin{exam}\label{ex:ds}
\begin{enumerate}
  \item There is unique spectrum satisfying the condition in Lemma \ref{lem:maxspectrum} for a given $(d,\chi)$. We present here a few examples of such spectra for the convenience of readers.
      \begin{center}
      \begin{tabular}{|c|l|}
        \hline
        $(d,\chi)$ &  \\
        \hline
        $(5,0)$ & $[1,0,-1,-2,-3]$ \\
        $(5,1)$ & $[1,0,-1,-2,-2]$ \\
        $(5,2)$ & $[1,0,-1,-1,-2]$ \\
        $(5,3)$ & $[1,0,0,-1,-2]$ \\
        \hline
        \hline
        $(6,3)$ & $[2,1,0,-1,-2,-3]$ \\
        $(6,4)$ & $[2,1,0,-1,-2,-2]$ \\
        $(6,5)$ & $[2,1,0,-1,-1,-2]$ \\
        $(6,6)$ & $[2,1,0,0,-1,-2]$ \\
        \hline
      \end{tabular}
      \end{center}
  When $(d,\chi)$ is of the form $(2k+1,0)$ or $(2k,k)$, the corresponding spectrum is $$[k-1,k-2,\cdots, k-d].$$ If $(d,\chi)$ is of the form $(2k+1,s)$ or $(2k,k+s)$ for $0\le s\le d$, last $s$ terms are increased by one from $[k-1,k-2,\cdots, k-d]$. Similar rules can be found for $s>d$.

  \item Note that the converse of Lemma \ref{lem:maxspectrum} is not true, that is, there may be different types of spectrum having the largest global section space. For example, one can check that there are three possible spectra for $(d,\chi)=(6,0)$:
  $$
\hspace{3em}  \{[0,0,0,-1,-2,-3],\mbox{ } [1,0,-1,-1,-2,-3],\mbox{ } [1,0,-1,-2,-2,-2]\}.
  $$
  For the corresponding loci of semistable sheaves in $\bM(6,0)$, see the Table 4 of \cite{maican2}. We remark that the locus corresponding to the spectrum $[1,0,-1,-1,-2,-3]$ has the biggest codimension. We will focus on such loci in \S3.
\end{enumerate}
\end{exam}

Returning to our main claim, we prove the part (2) of Theorem \ref{maintheorem}.
\begin{proof}[Proof of Theorem \ref{maintheorem}]
It remains to compute $h^0(F)$ of the sheaf $F$ having the spectrum satisfying the condition in Lemma \ref{lem:maxspectrum}. We will show the existence of such sheaves later in Example \ref{example1} and \S 3.

Such spectrum is completely determined by $a_1$ and $1\le j\le d$ such that $a_j=a_{j+1}$.
Here, $j=d$ means there is no $j$ with $a_j=a_{j+1}$. For notational convenience, we let $k:=a_1$ and $r:=d-j$, $0\le r\le d-1$.

Then it is easy to see that
\begin{equation}\label{eq:sumofspec}
a_1+\cdots+a_d= kd - \frac{d(d-1)}{2} +r.
\end{equation}
So, $\chi = kd - \frac{d(d-3)}{2} +r$ and hence $k$ and $r$ are uniquely determined by $\chi$.

Suppose $k\ge d-3$ and $r\ge 1$, in other words, $\chi\ge \frac{d(d-3)}{2}+1=g(d)$. Then we see that $a_d\ge -1$, which implies that all higher cohomologies vanish and we have $h^0(F)=\chi$. This gives another proof of Lemma \ref{lem:h1vanishing}.

Now we suppose $\chi< g(d)$. Then the spectrum is given by
\begin{equation}\label{eq:maxspec}
[k, k-1, \cdots, k-d+r+1, k-d+r+1, \cdots, k-d+2].
\end{equation}

If $r\le d-k-1$, the nonnegative terms in the spectrum are $(k, k-1, \cdots, 0)$ and hence we have
\[ h^0(F) =\frac{(k+2)(k+1)}{2}.  \]

On the other hand, if $r> d-k-1$ we have 
\[ h^0(F) =\frac{(k+2)(k+1)}{2} + (k-d+r+2).  \]
So the theorem follows.
\end{proof}

The following example shows that the bounds in Theorem \ref{maintheorem} are sharp.

\begin{exam}\label{example1}
  Let $k$, $j$, and $r$ be determined by $d$ and $\chi$ as in the proof of Theorem \ref{maintheorem}. Let us denote by $x$, $y$, and $z$ the homogeneous coordinates for $\PP^2$. Let $C_d$ be the $d$-fold thickening of a fixed line in $\PP^2$. For example, Let $C_d$ be the subscheme defined by the ideal $\langle z^d \rangle$.

Let $Z_j$ be the subscheme of $C_d$ defined by the ideal $\langle x, z^{j}\rangle$. We take $F=I_{Z_j,C_d}(k+1)$ be the twisted ideal sheaf of $Z_j$ in $C_d$. Then we claim that $F$ is a semistable sheaf in $\bM(d,\chi)$ whose spectrum is \eqref{eq:maxspec}.

  Since $\pt$ is a toric variety, we consider the natural action of the torus $T=(\CC^*)^2$. Then since the ideal defining $C$ and $Z_r$ is torus invariant, we see that $F$ is a $T$-equivariant sheaf. It is known that a $T$-equivariant sheaf is semistable if and only if all of its $T$-equivariant subsheaves satisfy the slope condition \cite[Proposition 3.19]{kool}. One can check that all saturated $T$-equivariant subsheaves of $F$ are of the form $I_{Z_{j'},C_{d'}}(k+1)$, where $d'\le d$ and $j'\le j$. By computing slopes, we see that $F$ is semistable. For more details on the torus equivariant sheaves, see \cite[\S 2.3]{choi} and \cite{choip1}.

  It remains to check that the spectrum of $F$ is \eqref{eq:maxspec}. When we restrict $F$ to the subscheme $C_j$ defined by the ideal $\langle z^j \rangle$, we have $$F|_{C_j}= I_{Z_j,C_j}(k+1) \simeq \cO_{C_j}(k),$$
  and the kernel of the restriction map is $\cO_{C_{d-j}}(k-j+1)$. So we have an exact sequence
  \[\ses{\cO_{C_{d-j}}(k-j+1)}{F}{ \cO_{C_j}(k)}. \]
  Since $r=d-j$, so the spectrum of $F$ is \eqref{eq:maxspec}.
\end{exam}

\section{Upper bound for a projective variety}
\label{sec:general}
In this section, we define a generalized spectrum of sheaves on a projective variety. This is a natural generalization of the $\PP^2$ case. Using this we prove Theorem \ref{mainthm2}.
Let $F$ be a coherent sheaf on a projective variety $X$ with a fixed embedding $i:X\subset \PP^r$. Since $H^0(X,F)=H^0(\PP^r, i_*F)$, we can regard $F$ as a sheaf on a projective space $\PP^r$.
Let the Hilbert polynomial of $F$ be $\chi(F(m))=dm+\chi$. Let $F$ be supported on a curve $C \subset \PP^r$. For an $(r-2)$-dimensional linear subspace $H$ such that $H\cap C= \emptyset$, let
$$
\pi:\PP^r\setminus H \longrightarrow \PP^1
$$
be the projection from $H$. Since $\pi_*F$ is a locally free sheaf of rank $d$ on $\PP^1$, we have $\pi_*F=\cO_{\pp^1}(a_1)\oplus \cO_{\pp^1}(a_2)\oplus \cdots \oplus\cO_{\pp^1}(a_d)$.
\begin{defi}
Under above notation, let us define
$$
v=[a_1,a_2,\cdots, a_d]
$$
where $a_1\ge \cdots\ge a_d$ to be the \emph{generalized spectrum} of the sheaf $F$.
\end{defi}
We are ready to prove Theorem \ref{mainthm2}.
\begin{proof}[Proof of Theorem \ref{mainthm2}]
It is enough to prove that the generalized spectrum of a semistable sheaf $F$ satisfies the two conditions in Proposition \ref{balanced}. Note that $\PP^r\setminus H$ is the total space of the bundle $\cO_{\pp^1}^{\oplus (r-1)}$ and $\pi$ is the affine morphism
\[\pi\colon \mathrm{Spec(Sym}\cO_{\PP^1}(-1)^{\oplus (r-1)})\to \PP^1.\]
Hence, part (1) of Proposition \ref{balanced} must be satisfied. For part (2), we use the semistability of $F$. Suppose the generalized spectrum of $F$ does not satisfy the condition in part (2). Then we can write $\pi_*F= G'\oplus G''$ such that $\Hom(G', G''(1))=0$. Hence $G'$ is a $\mathrm{Sym}\cO_{\PP^1}(-1)^{\oplus (r-1)}$-submodule of $\pi_*F$. So there exists a subsheaf $F'$ of $F$ induced by $G'$. It is easy to check that the slope of $F'$ is greater than the slope of $F$, which violates the semistability.
\end{proof}


We now prove a generalization of the Clifford theorem.
\begin{coro}\label{conj1}
For a semistable sheaf $F$ on a projective variety $X$ with Hilbert polynomial $dm+\chi$ with $\chi \ge 1-g(d)$ and $h^1(F)>0$, we have
\begin{equation}\label{clifford}
h^0(F)\le 1+ \frac{\chi}{2} + \frac{d(d-3)}{4}.
\end{equation}
\end{coro}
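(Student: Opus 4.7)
My plan is to reformulate the Clifford-type bound so that it becomes directly comparable with the explicit bound from Theorem \ref{mainthm2}. Since $h^0(F)-h^1(F)=\chi$, the inequality \eqref{clifford} is equivalent to $h^0(F)+h^1(F)\le g(d)+1$; writing $\chi+\frac{d(d-3)}{2}=kd+r$ with $0\le r<d$ as in Theorem \ref{maintheorem}, it also takes the clean form $h^0(F)\le 1+\frac{kd+r}{2}$.

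The first step is to show that the hypothesis $h^1(F)>0$ forces $\chi\le g(d)-1$; this is the analogue of Lemma \ref{lem:h1vanishing} on an arbitrary projective variety and falls directly out of the generalized spectrum. Let $[a_1,\ldots,a_d]$ be the generalized spectrum of $F$. Since $\pi$ is affine, $h^1(F)=h^1(\PP^1,\pi_*F)=\sum_j\max(-a_j-1,0)$, so $h^1(F)>0$ implies $a_d\le-2$. The balanced property then gives $a_j\le a_d+(d-j)\le d-j-2$ for every $j$, and summing we get $\sum a_j\le \frac{d(d-5)}{2}$, i.e.\ $\chi\le \frac{d(d-3)}{2}=g(d)-1$.

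Combined with $\chi\ge 1-g(d)$, the parameters $(k,r)$ are forced into the range $0\le k\le d-3$, with $r=0$ whenever $k=d-3$. I would then apply Theorem \ref{mainthm2} to bound $h^0(F)\le \frac{(k+1)(k+2)}{2}+\max\{0,k-d+r+2\}$, and split on the sign of $k-d+r+2$. In the case $r\le d-k-2$, the target reduces to $k(k+3-d)\le r$, which is automatic since $k\ge 0$ and $k+3-d\le 0$. In the case $r\ge d-k-1$, the forcing in the previous step excludes $k=d-3$, so $k\le d-4$, and the target reduces to nonnegativity of the concave quadratic $f(k)=-k^2+(d-5)k+(d-3)$ on $0\le k\le d-4$, which is verified by the endpoints $f(0)=d-3$ and $f(d-4)=1$.

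The main obstacle is the second case: one has to observe that the exclusion of $k=d-3$ (for $r>0$), inherited from step one, is precisely what the inequality needs, since $f(d-3)=3-d<0$ would otherwise make the bound fail at the relevant boundary. Once this subtlety is visible, the rest of the argument is a straightforward translation through the spectrum, and the Clifford bound is tight exactly at the two boundaries $\chi=g(d)-1$ and $\chi=1-g(d)$, which is reassuring.
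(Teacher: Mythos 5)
Your argument is correct, and it takes a genuinely different route from the paper's. The paper sets $\beta(\chi)$ equal to the bound of Theorem \ref{maintheorem}(2) and $\gamma(\chi)$ equal to the Clifford bound, checks $\beta=\gamma$ at the two endpoints $\chi=1-g$ and $\chi=g-1$, and then compares the two as $\chi$ decreases from $g-1$ by a discrete-derivative count: $\beta$ drops by $1$ exactly at those $\chi'$ with $k'-d+r'+2\ge 0$, $\gamma$ drops by $\tfrac12$ at every step, and more than half of the intermediate $\chi'$ satisfy $k'-d+r'+2\ge0$ (a lattice-point count above the line $k+r=d-2$). You instead verify $\tfrac{(k+1)(k+2)}{2}+\max\{0,k-d+r+2\}\le 1+\tfrac{kd+r}{2}$ directly by algebra in $(k,r)$, which is more explicit and makes the delicate boundary visible: the concave quadratic $f(k)=-k^2+(d-5)k+(d-3)$ is nonnegative on $[0,d-4]$ but $f(d-3)=3-d<0$, and the exclusion of $k=d-3$ from the second case (forced by $r=0$ there) is exactly what saves the inequality. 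You also supply a detail the paper elides: for a general projective variety the reduction to $\chi\le g(d)-1$ is not literally covered by Lemma \ref{lem:h1vanishing}, which is stated for $\PP^2$; your observation that $h^1(F)=\sum_j\max(-a_j-1,0)$ via the affine projection, so that $h^1(F)>0$ forces $a_d\le-2$ and hence $\chi\le g(d)-1$ by the balanced property, is the right generalization. Both proofs rest on Theorem \ref{mainthm2}; yours trades the paper's counting picture for a transparent two-case computation, and your closing remark on tightness at $\chi=1-g$ and $\chi=g-1$ matches the paper's remark that equality occurs exactly for $\cO_C$ and $\omega_C$.
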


\begin{proof}
Fix $d$ and we write $g=g(d)$ for notational convenience. When $\chi\ge g$, the statement is vacuous by part (1) of Theorem \ref{maintheorem}. Hence we may assume $1-g\le \chi\le g-1$. In particular, we may assume $d\ge 3$. Let $\beta(\chi)$ be the bound in part (2) of Theorem \ref{maintheorem} and $\gamma(\chi)$ be the bound in the statement of this corollary. We claim that if $1-g\le \chi\le g-1$, then we have $\beta(\chi)\le \gamma(\chi)$.

  It is easy to check that $\beta(1-g)=\gamma(1-g)$ and $\beta(g-1)=\gamma(g-1)$.

  Then we can check
  \[ \beta(\chi) = \beta(g-1) - \#\{\chi' \colon \chi\le\chi'< g-1, k'-d+r'+2\ge 0 \},\]
  where $k'$ and $r'$ are obtained from $\chi'$ similarly as in Theorem \ref{maintheorem}, and
  \[ \gamma(\chi) = \gamma(g-1) - \frac{1}{2}\#\{\chi' \colon \chi\le\chi'< g-1 \}.\]
  We claim that more than half of $\chi'$ with $\chi\le\chi'<g-1$ yield $k'-d+r'+2\ge0$. Hence we have $\beta(\chi)\le\gamma(\chi)$.

  By definition, $\chi$ is in one-to-one correspondence with $(k,r)$. For $\chi'$ such that $\chi\le\chi'<g-1$, corresponding $(k',r')$ are those with $k<k'\le d-4$ or with $k'=k$ and $r'\ge r$. The condition $k'-d+r'+2\ge0$ requires $(k',r')$ to be above or on the line $k+r=d-2$, which is satisfied more than half of such points. So, the corollary follows.
  Note that when $\chi=1-g$, exactly half of $\chi'$ with $\chi\le\chi'<g-1$ satisfy $k'-d+r'+2\ge0$, which gives us $\beta(1-g)=\gamma(1-g)$.
\end{proof}

This corollary is conjectured in \cite[\S1.4]{maican} for $X=\PP^2$ and $0\le \chi<d$. We have proved a more general statement.
\begin{rema}
  From the proof of Corollary \ref{conj1} and Theorem \ref{maintheorem}, the equality in \eqref{clifford} holds if and only if $F$ is a plane sheaf and its spectrum is either $[0,-1,\cdots, 1-d]$ or $[d-3,d-2,\cdots, -2]$, which is respectively when $F\simeq \cO_C$ or $F\simeq \omega_C$ for some plane curve $C$. We note that the bound in \eqref{clifford} is produced from the Clifford theorem by applying the Riemann-Roch theorem using the arithmetic genus of the plane curve. Hence for non-planar sheaves, Corollary \ref{conj1} can be improved according to the arithmetic genus of the curve on which $F$ is supported.
\end{rema}

\section{Stratification of $\bM(d,\chi)$ via spectra}\label{sec:deep}
In this section, we return to the case $X=\PP^2$. We study the stratification of $\bM(d,\chi)$ with respect to the spectrum. It turns out that the spectrum we chose in \S \ref{sec2} corresponds to the closed stratum with the biggest codimension (cf. Proposition \ref{katz}). In Proposition \ref{prop:dsodd} and Proposition \ref{prop:dseven}, we describe this locus in terms of relative Hilbert schemes by using the wall-crossing technique of \cite{kiryong}.

Let $v$ be a spectrum. We denote by $\bM_v(d,\chi)$ the locus in $\bM(d,\chi)$ consisting of sheaves having spectrum $v$.
\begin{lem}
  $\{\bM_v(d,\chi)\}$ is a finite locally closed stratification of $\bM(d,\chi)$.
\end{lem}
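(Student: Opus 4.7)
The claim decomposes into three parts: (i) only finitely many spectra $v$ arise as $F$ ranges over $\bM(d,\chi)$; (ii) each stratum $\bM_v(d,\chi)$ is locally closed; (iii) the strata cover $\bM(d,\chi)$. Part (iii) is immediate, since every semistable $F$ admits a spectrum after choosing any point $a\notin\mathrm{Supp}(F)$, and by Remark \ref{rem1} this spectrum is independent of the choice.

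For finiteness I would appeal to Proposition \ref{balanced}. The balanced condition $a_j-a_{j+1}\le 1$ forces $a_1-a_d\le d-1$, while the sum constraint $\sum_{i=1}^d a_i=\chi-d$ then pins the average, confining each $a_i$ to a bounded interval depending only on $d$ and $\chi$. Hence only finitely many spectra are possible.

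For local closedness, the strategy is to express $\bM_v(d,\chi)$ as a cohomology jumping locus. By Remark \ref{rem1} the spectrum is encoded in the function $k\mapsto h^0(F(k))$; explicitly, if $\pi_*F=\bigoplus_{i=1}^d \cO_{\PP^1}(a_i)$ then
\[ h^0(F(k))=\sum_{i=1}^d \max(a_i+k+1,\,0), \]
and this piecewise-linear profile uniquely recovers the multiset $\{a_i\}$. Since only finitely many spectra are possible, only finitely many values of $k$ are needed to distinguish them, so $\bM_v(d,\chi)$ is carved out by finitely many equalities $h^0(F(k))=c_k$. On a parameter scheme $R$ (e.g.\ a suitable Quot scheme) that carries a universal family and whose good quotient by a reductive group is $\bM(d,\chi)$, each locus $\{t:h^0(F_t(k))\ge c_k\}$ is closed by upper semicontinuity, and intersecting with the open complement of $\{t:h^0(F_t(k))\ge c_k+1\}$ yields a locally closed, group-invariant subset of $R$ whose image is $\bM_v(d,\chi)$.

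The main obstacle I expect is the descent step to the coarse moduli space, which must be handled carefully because no global universal family exists on $\bM(d,\chi)$. This can be addressed either étale-locally, where a universal family exists after twisting, or by invoking the general fact that good GIT quotients by reductive groups send invariant locally closed subsets to locally closed subsets of the quotient; once that is in place, the stratification statement is formal.
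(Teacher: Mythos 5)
Your proposal is correct and follows essentially the same route as the paper's proof: the spectrum is determined by the function $k\mapsto h^0(F(k))$ (Remark \ref{rem1}), local closedness follows from semicontinuity, and finiteness follows from the constraints on spectra. You simply spell out details the paper leaves implicit, namely the finiteness count via Proposition \ref{balanced} and the descent from an equivariant parameter space to the coarse moduli space.
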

\begin{proof}
 As noted in Remark \ref{rem1}, the spectrum of a sheaf $F$ is completely determined by $k\mapsto h^0(F(k))$. Hence the subscheme $\bM_v(d,\chi)$ is cut out by conditions on $h^0(F(k))$. By the semicontinuity theorem, it is clear that $\bM_v(d,\chi)$ is locally closed. Since only finitely many spectra are possible after fixing $d$ and $\chi$, $\bM_v(d,\chi)$ is a finite locally closed stratification of $\bM(d,\chi)$.
\end{proof}

The strata provided by our classification are irreducible varieties.
\begin{prop}\label{katz}
 $\bM_v(d,\chi)$ is an irreducible variety of $\bM(d,\chi)$ whose codimension is the size of the set $ \{(a_i, a_j)\colon |a_i-a_j|\ge 2\}$.
\end{prop}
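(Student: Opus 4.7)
The plan is to reduce the problem to a linear-algebra calculation on $\PP^1$ via the identification of each open set $U_a\subset\bM(d,\chi)$ (sheaves whose support avoids $a\in\PP^2$) with the moduli of $\cO_{\PP^1}(1)$-twisted Higgs bundles on $\PP^1$, recalled after Definition \ref{defofspectrum}. Set $X_a:=\bM_v(d,\chi)\cap U_a$. Under this identification, $X_a$ parametrises semistable Higgs pairs $(G,\phi\colon G\to G(1))$ with $G\cong G_0:=\bigoplus_{i=1}^d\cO_{\PP^1}(a_i)$, and two such pairs are isomorphic iff their $\phi$'s lie in the same orbit of $\mathrm{Aut}(G_0)$ acting on $\Hom(G_0,G_0(1))$ by $g\cdot\phi=g(1)\phi g^{-1}$. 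Since $\Hom(G_0,G_0(1))$ is affine (hence irreducible), its semistable locus is open and non-empty by Example \ref{lem:dualspectrum}(3), and $\mathrm{Aut}(G_0)$ is connected (an open subset of the vector space $\End(G_0)$), it follows that $X_a$ is irreducible.

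For the dimension, the stabilizer in $\mathrm{Aut}(G_0)$ of a stable pair $(G_0,\phi)$ is $\CC^*$, so
\[\dim X_a \;=\; \dim\Hom(G_0,G_0(1))-\dim\End(G_0)+1.\]
Using $h^0(\cO_{\PP^1}(n))=\max(0,n+1)$, the contribution of each ordered pair $(i,j)$ to this difference is $\max(0,a_j-a_i+2)-\max(0,a_j-a_i+1)$, which equals $1$ when $a_i-a_j\le 1$ and $0$ otherwise. Hence $\dim\Hom(G_0,G_0(1))-\dim\End(G_0)=d^2-N$, where $N:=\#\{(i,j):a_i-a_j\ge 2\}$, so $\dim X_a=d^2+1-N$. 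Combined with $\dim\bM(d,\chi)=d^2+1$, the codimension of $X_a$ is $N$; and since $a_1\ge\cdots\ge a_d$ the integer $N$ coincides with the cardinality of the set $\{(a_i,a_j):|a_i-a_j|\ge 2\}$ appearing in the statement.

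Irreducibility of the global stratum then follows from the cover $\bM_v(d,\chi)=\bigcup_a X_a$: the $\mathrm{PGL}_3$-action on $\bM(d,\chi)$ preserves the spectrum by Remark \ref{rem1}, and for any pair of points $a,b\in\PP^2$ one can translate the support of a sheaf in $X_a$ off of $b$ to exhibit a point of $X_a\cap X_b$; thus any two $X_a$'s share a common dense open, forcing all of them to lie in a single irreducible component of $\bM_v(d,\chi)$. The main obstacle I anticipate is a clean justification that the generic stabilizer of a stable Higgs pair in $\mathrm{Aut}(G_0)$ is exactly $\CC^*$, which reduces to the sheaf-vs-Higgs-bundle stability correspondence of \cite[Lemma~3.12]{lepot1} already invoked in Proposition \ref{balanced}; the combinatorial bookkeeping in the $\Hom$-versus-$\End$ cancellation is a secondary, fussier but routine task.
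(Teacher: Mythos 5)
Your proposal is correct, and its skeleton --- restrict to the open locus $U_{a,v}$ of sheaves whose support avoids a point $a$, identify it with a quotient of the affine space $\Hom(G_0,G_0(1))$ of twisted Higgs fields, deduce irreducibility there, and then globalize over $a$ --- is exactly the paper's. The two places where you genuinely diverge are both sound. For globalizing irreducibility, the paper exhibits a surjection $PGL_3\times U_{a,v}\to \bM_v(d,\chi)$ (through the incidence variety $R_v=\{(F,p):p\notin \mathrm{Supp}(F)\}$) and uses that the image of an irreducible variety is irreducible; you instead cover $\bM_v(d,\chi)$ by the irreducible open sets $X_a$ and check pairwise nonempty intersections, which works because a degree $d$ curve can be moved off any two prescribed points by $PGL_3$ while the spectrum is preserved (Remark \ref{rem1}). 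For the codimension, the paper quotes Le Potier's formula $\dim\Ext^1(G,G)-\dim\Ext^1(G,G(1))$ for the codimension of the stratum, whereas you compute $\dim X_a=\dim\Hom(G_0,G_0(1))-\dim\End(G_0)+1$ directly and subtract from $\dim\bM(d,\chi)=d^2+1$; the two bookkeepings agree (both reduce to counting pairs with $a_i-a_j\ge 2$), but note that your route trades one external input (Le Potier's Proposition 3.14) for another ($\dim\bM(d,\chi)=d^2+1$, also due to Le Potier). Two points you should tighten: the openness of the semistable locus in $\Hom(G_0,G_0(1))$, which you merely assert, is what the paper actually proves, via properness of the relative Quot schemes over $S=\Hom(G_0,G_0(1))$; and your stabilizer-equals-$\CC^*$ step (hence the fiber dimension of the quotient map) is valid only at stable points, so the dimension count as written presumes the generic sheaf of spectrum $v$ is stable --- an assumption the paper also leaves implicit.
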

\begin{proof}
Fix a point $a \in \PP^2$. As in \S\ref{sec2}, let $$U_a=\{F\in \bM_v(d,\chi)\colon a\notin \mbox{Supp}(F)\}.$$
We first claim that it is enough to show that
$U_{a,v}:= \bM_v(d,\chi) \cap U_a$ is an irreducible variety.
Consider $$R_v=\{(F,p)\in \bM_v(d,\chi)\times \PP^2 \colon p\notin F\}.$$ Then the projection $R_v\to \bM_v(d,\chi)$ is surjective. We also have the surjective morphism
\begin{center}
\begin{tabular}{ccc}
  $PGL_3\times U_{a,v}$ &$\to$& $R_v$\\
  $(\alpha, F)$ &$\mapsto$& $(\alpha^*F, \alpha^{-1} a)$
\end{tabular}.
\end{center}
where $\alpha\in PGL_3$ is considered as an automorphism $\alpha\colon \PP^2\to \PP^2$. Hence $\bM(d,\chi)$ is irreducible provided that $U_{a,v}$ is irreducible.\footnote{This argument is due to S. Katz.}

To prove the irreducibility of $U_{a,v}$, we check the semistability of Higgs bundles is an open condition. We follow the argument in the proof of \cite[Proposition 2.3.1]{HL}. Let $G$ be the vector bundle on $\pp^1$ corresponding to the spectrum $v$. Let $S:=\Hom(G,G(1))$ be the affine space.
Since the relative Quot scheme is a fine moduli space, there exist a universal family $\cF$ of Higgs bundles parameterized by $S$ and a projective morphism:
$$
\pi:Q(d' ,\chi' )=\mbox{Quot}^{d' ,\chi'}(\cF/S)\lr S
$$
which parameterizes the couples $(\phi, F'=(G',\phi'))$ such that $\phi\in \Hom(G,G(1))$ and the quotient $(G,\phi)\twoheadrightarrow F'$ (For detail, see the page 651 of \cite{lepot1}).
Since $\pi$ is projective, the image $\pi(Q(d',\chi'))$ is a closed subset in $S$.
Also, since the number of the destabilizing pair $(d',\chi')$ is finite,  the union of such $\pi(Q(d',\chi'))$ is also closed in $S$. Therefore we have an open subvariety $\Hom(G,G(1))^{ss}$ in the affine space $S$.
Hence from the quotient map
$$
\Hom(G,G(1))^{ss}\lr U_{a,v},
$$
one can say that $U_{a,v}$ is an irreducible variety.

On the other hand, from the proof of Proposition 3.14 in \cite{lepot1}, the codimension of $\bM_v(d,\chi)$ in $\bM(d,\chi)$ is
$$
\mbox{dim}\Ext^1(G,G)-\mbox{dim}\Ext^1(G,G(1))
$$
and hence is equal to the size of the set $ \{(a_i, a_j)\colon |a_i-a_j|\ge 2\}$.
\end{proof}

\begin{exam}
In the series of papers \cite{maican, maican1, maican2}, Maican et al have studied the moduli spaces $\bM(d,\chi)$ for $d\le 6$ using a stratification. They have classified sheaves $F$ in $\bM(d,\chi)$ by conditions on $h^i(F(j))$ and $h^0(F\otimes \Omega^1(1))$, which in turn determine the syzygy types of the sheaves $F$. In many cases, their stratification coincides with our stratification by spectra. For example, $\bM(4,1)$ is a union of $X_0$ and $X_1$ in \cite{maican}. In our notation, $X_0$ is $\bM_{[0,-1,-1,-1]}(4,1)$ and $X_1$ is $\bM_{[1,0,-1,-1]}(4,1)$. However, since they have used the additional conditions on $h^0(F\otimes \Omega^1(1))$, their stratification is finer than the stratification by spectra.
\end{exam}
This stratification by spectra becomes very complicated as $d$ increases. In this paper, we confine ourselves to the deepest stratum.
\begin{defi}
\begin{enumerate}
  \item Let $v=[a_1,\cdots, a_d]$ be a spectrum. We call $v$ the \emph{deepest spectrum} if it satisfies the condition in Lemma \ref{lem:maxspectrum}.
  \item When $v$ is the unique deepest spectrum for fixed $d$ and $\chi$, we denote $\bM_v(d,\chi)$ by $\ds$.
\end{enumerate}
\end{defi}

\begin{prop}
 $\ds$ is the unique closed stratum in $\{\bM_v(d,\chi)\}$ having the biggest codimension among strata.
\end{prop}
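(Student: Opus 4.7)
The plan is to establish two combinatorial facts about the deepest spectrum $v^*$, from which the proposition follows. Fact~(i): $v^*$ uniquely maximizes the codimension of $\bM_v(d,\chi)$ among all balanced spectra with fixed $(d,\chi)$. Fact~(ii): $v^*$ pointwise maximizes the Hilbert function $H_v(k):=h^0(F(k))$ for $F\in\bM_v$, i.e.\ $H_{v^*}(k)\ge H_v(k)$ for every balanced $v$ and every $k$. Fact~(i) directly gives that $\ds$ is the unique stratum of largest codimension; Fact~(ii), together with upper semicontinuity of $F\mapsto h^0(F(k))$ and Remark~\ref{rem1} (the Hilbert function determines the spectrum), identifies $\ds$ with the closed locus $\bigcap_k\{F:h^0(F(k))\ge H_{v^*}(k)\}$ and hence proves closedness.

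For Fact~(i), I rewrite the codimension formula of Proposition~\ref{katz} via the multiplicities $m_s:=\#\{i:a_i=s\}$ and the excess $n_s:=m_s-1\ge 0$ on the support interval $[b,t]$ of length $l+1$, obtaining
\[
\mathrm{codim}\,\bM_v=\binom{d}{2}-T(v),\qquad T(v)=U(n)+3(d-l-1)-n_b-n_t+l,
\]
where $U(n):=\sum_s\binom{n_s}{2}+\sum_s n_sn_{s+1}\ge 0$. The deepest spectrum $v^*$ realizes the maximal support length ($l=d-1$ when $r=0$, else $l=d-2$), so $T(v^*)\in\{d-1,d,d+1\}$ according to the subcase. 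For each admissible support length $l$ one bounds $T(v)$ from below using the constraints $\sum_s n_s=d-l-1$ and $\sum_s s\,m_s=\chi-d$, and verifies the bound strictly exceeds $T(v^*)$ unless $v=v^*$. For Fact~(ii), I view the sorted spectra, after a common shift, as partitions of a fixed integer with a fixed number of parts and argue that $v^*$ is maximal in the partition dominance order on balanced spectra, via a greedy lexicographic construction on $(a_1,a_2,\ldots)$.

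Two technical obstacles stand out. In Fact~(i), the borderline case when $v^*$ has an interior doubled value (corresponding to $r\in\{2,\ldots,d-2\}$, $T(v^*)=d+1$) admits a putative competitor at $l=d-3$ with $n_b=n_t=1$ that would match $T(v^*)$; this is ruled out by plugging the configuration into the Euler-characteristic equation, which forces $b=k-d+2+r/d$, an integer precisely when $r=0$. In Fact~(ii), verifying that the lex-greedy choice at each coordinate is feasible and that the resulting lex-maximum is simultaneously dominance-maximum requires a careful inductive argument, ensuring the remaining sum can always be distributed compatibly with balancedness while preserving pointwise dominance of partial sums. These two verifications constitute the main technical work.
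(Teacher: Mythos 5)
Your proposal is sound in outline, but it takes a substantially heavier route than the paper on the closedness half, while being more explicit than the paper on the codimension half. For closedness, the paper observes that the deepest spectrum is characterized among all balanced spectra with the given sum by just two extremal conditions --- $a_1$ maximal and $a_d$ minimal --- so that
\[
\ds=\{G\in\bM(d,\chi)\colon h^0(G(-a_1))>0 \text{ and } h^1(G(-a_d-2))>0\},
\]
which is closed by semicontinuity of $h^0$ and $h^1$ alone; no pointwise domination of the whole Hilbert function is needed. Your Fact~(ii) (dominance-order maximality of $v^*$, hence $H_{v^*}(k)\ge H_v(k)$ for all $k$) is true and does yield closedness via $\bigcap_k\{F\colon h^0(F(k))\ge H_{v^*}(k)\}$, but it is strictly stronger than what is required, and the ``careful inductive argument'' you defer can be bypassed entirely: once you know $b_1\le a_1^*$ and $b_d\ge a_d^*$ for every balanced spectrum $b$ with the correct sum, the two conditions above already cut out exactly $\ds$, since a balanced sequence of length $d$ with prescribed sum, prescribed top value $a_1^*$ and prescribed bottom value $a_d^*$ is unique. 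For the codimension statement the situation is reversed: the paper simply asserts that maximality is ``clear from Proposition~\ref{katz},'' whereas your Fact~(i) supplies the actual combinatorial verification. Your bookkeeping is correct (I checked that $T(v)=U(n)+3(d-l-1)-n_b-n_t+l$ and that $T(v^*)\in\{d-1,d,d+1\}$ according as $r=0$, $r\in\{1,d-1\}$, or $2\le r\le d-2$, consistent with the paper's remark that $\mathrm{codim}\,\ds=\frac{d^2-3d-2}{2}$ in the last case), and the one genuine tie --- the $l=d-3$, $n_b=n_t=1$ configuration with $T=d+1$ --- is indeed killed by the Euler-characteristic constraint exactly as you say, since it forces $b=k-d+2+r/d\notin\ZZ$ for $r\ne 0$. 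So your approach is correct and in fact fills in a step the paper leaves implicit; I would only urge you to replace Fact~(ii) by the paper's two-condition semicontinuity argument, which eliminates the most delicate remaining verification.
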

\begin{proof}
We note that $v=[a_1,\cdots, a_d]$ is the deepest spectrum if and only if $a_1$ is maximal and $a_d$ is minimal among all spectra for $\bM(d,\chi)$.
Hence we have
\[ \ds=\{G\in \bM(d,\chi)\colon h^0(G(-a_1))>0 \text{ and } h^1(G(-a_d-2))>0 \}.\]
Thus, it is closed by the semicontinuity theorem. Moreover, it is clear from Proposition \ref{katz} that $\ds$ has the biggest codimension.
\end{proof}

Hence we see that $\ds$ is indeed the deepest stratum in the stratification $\{\bM_v(d,\chi)\}$.

\begin{prop}\label{prop:dual}
\begin{enumerate}
  \item $\ds\simeq \ds[d+\chi]$.
  \item If $0\le \chi < d$, then $\ds\simeq \ds[d-\chi]$.
\end{enumerate}
\end{prop}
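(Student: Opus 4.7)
The plan is to construct explicit isomorphisms at the level of sheaves and verify that they preserve the deepest-spectrum condition of Lemma \ref{lem:maxspectrum}.

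For part (1), I would use the twist functor $F\mapsto F(1)$, which is manifestly an isomorphism $\bM(d,\chi)\simeq \bM(d,d+\chi)$ because $\chi(F(m+1))=dm+(d+\chi)$ and semistability is preserved by tensoring with a line bundle. By Example \ref{lem:dualspectrum}(2), this functor sends the spectrum $[a_1,\dots,a_d]$ to $[a_1+1,\dots,a_d+1]$, which is a deepest spectrum if and only if the original one is (the condition ``$a_j=a_{j+1}$ for at most one $j$'' is invariant under a uniform shift). Hence the twist restricts to the desired isomorphism $\ds\simeq \ds[d+\chi]$.

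For part (2), I would use the duality $F\mapsto F^D=\mathcal{E} xt^1(F,\omega_{\PP^2})$ of Example \ref{lem:dualspectrum}(2). For pure one-dimensional sheaves on $\PP^2$ this is an involutive anti-equivalence that preserves semistability and replaces $\chi$ by $-\chi$, so it gives an isomorphism $\bM(d,\chi)\simeq \bM(d,-\chi)$. By the same reference, it sends the spectrum $[a_1,\dots,a_d]$ to $[-2-a_d,\dots,-2-a_1]$. A short check shows that this reverse-and-negate operation preserves the at-most-one-repeat condition: writing $b_i:=-2-a_{d+1-i}$, we have $b_i=b_{i+1}$ if and only if $a_{d-i}=a_{d+1-i}$, so the number of coincident neighbors is unchanged. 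Consequently the duality restricts to an isomorphism $\ds\simeq \ds[-\chi]$, and composing with part (1) yields $\ds\simeq \ds[-\chi]\simeq \ds[d-\chi]$, as required. The hypothesis $0\le\chi<d$ simply picks a canonical representative modulo the twist of part (1), so that $d-\chi$ again lies in the natural range.

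I do not anticipate any genuine obstacle. The two main inputs are already in hand: (i) the two operations $F\mapsto F(1)$ and $F\mapsto F^D$ act in a completely transparent way on spectra via Example \ref{lem:dualspectrum}(2); and (ii) the combinatorial property defining a deepest spectrum---being strictly decreasing by $1$ except for at most one repeated value---is visibly preserved by both a uniform integer shift and the reverse-and-negate operation. The only remark worth making is that the Hilbert polynomial shifts ($\chi\mapsto d+\chi$, respectively $\chi\mapsto -\chi$) must be matched with the correct spectrum transformation, but this is immediate from the projection formula and Serre duality recalled in Example \ref{lem:dualspectrum}(2).
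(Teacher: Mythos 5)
Your proposal is correct and follows essentially the same route as the paper: part (1) is the twist by $\cO_{\PP^2}(1)$, and part (2) is the map $F\mapsto F^D(1)$ (which you merely factor as the duality $F\mapsto F^D$ followed by the twist), with the action on spectra read off from Example \ref{lem:dualspectrum}(2). The only difference is that you spell out the easy combinatorial check that both operations preserve the deepest-spectrum condition, which the paper leaves implicit.
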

\begin{proof}
The first statement is obtained by twisting by $\opt(1)$. For the second statement, by part (2) of Example \ref{lem:dualspectrum}, under the isomorphism $\bM(d,\chi)\simeq \bM(d,d-\chi)$ sending $F$ to $F^D(1)$, sheaves in $\ds$ correspond to sheaves in $\ds[d-\chi]$.
\end{proof}

Hence it is enough to consider only finitely many cases for fixed $d$. In what follows, we describe $\ds$ as a subscheme of the relative Hilbert scheme. 
We start with reviewing results in \cite{kiryong}.
\begin{defi}
We denote by $\bB(d,n)$ the relative Hilbert scheme consisting of pairs $(C,Z)$ where $C$ is degree $d$ curve in $\PP^2$ and $Z\subset C$ is length $n$ subscheme.
\end{defi}

In \cite{kiryong}, we study the relationship between $\bB(d,n)$ and the moduli space of stable sheaves using $\alpha$-stable pairs.
A pair $(s, F)$ consists of a sheaf $F$ on $\PP^2$ and one-dimensional subspace $s \subset H^0(F)$. Given a positive number $\alpha$, a pair $(s,F)$ is called \emph{$\alpha$-semistable} if $F$ is pure and for any subsheaves $F'\subset  F$, the inequality
$$
\frac{\chi(F'(m))+\delta\cdot\alpha}{r(F')} \leq \frac{\chi(F(m))+\alpha}{r(F)}
$$
holds for $m\gg 0$. Here $r(F)$ is the leading coefficient of Hilbert polynomial $\chi(F(m))$ and $\delta=1$ if the section $s$ factors through $F'$ and $\delta=0$ otherwise.
When the strict inequality holds, $(s,F)$ is called $\alpha$-stable. By the work of Le Potier \cite{lepot2}, there exist the moduli spaces $\bM^{\alpha}(d,\chi)$ which parameterizes S-equivalent classes\footnote{By definition, two semistable pairs are S-equivalent if they have isomorphic Jordan-H\"{o}lder filtration.} of $\alpha$-semistable pairs with Hilbert polynomial $dm+\chi$.

At the extreme values of $\alpha$, the moduli spaces of $\alpha$-stable pairs are related to $\bB(d,n)$ and $\bM(d,\chi)$.
\begin{enumerate}
\item If the stability parameter $\alpha$ is sufficiently large (denoted by $\alpha=\infty$), this moduli space $\bM^{\alpha}(d,\chi)$ is isomorphic to the relative Hilbert scheme $\bB(d,n)$ where $n$ is $\chi-1+g(d)$. The correspondence is the following \cite[Proposition B.8]{pt3}: For $\infty$-stable pair $(s,F)$,  the section $s:\cO_{\PP^2}\lr F$ induces a short exact sequence $$\ses{\cO_C}{F}{Q}$$ for a degree $d$ curve $C$ and zero dimensional sheaf $Q$ with length $n$. As taking dual  $\mathcal{H}om_C(-, \cO_C)$ to the short exact sequence, we obtain a zero dimensional subscheme $Z$ defined by the surjection: $$\cO_C\twoheadrightarrow  \mathcal{E}xt^1_C(Q, \cO_C),$$
where the ideal sheaf is given by $I_{Z,C}=\mathcal{H}om_C(F, \cO_C)$.

\item On the other hand, if the stability parameter is sufficiently small (denoted by $\alpha=0^+$), then there exists a forgetting morphism into the space $\bM(d,\chi)$, denoted by
$$
\xi:\bM^{0^+}(d,\chi)\lr \bM(d,\chi).
$$
\end{enumerate}
The wall-crossing behavior of the moduli spaces $\bM^{\alpha}(d,\chi)$ is studied in \cite{mhe,kiryong}. The moduli space changes only finitely many values of $\alpha$, which are called \emph{walls}. The wall occurs if there exist strictly $\alpha$-semistable pairs.
If there is no wall between $\bM^{\infty}(d,\chi)$ and $\bM^{0^+}(d,\chi)$, two spaces are isomorphic.

\begin{rema}
Let us denote by $\bM_v^\alpha(d,\chi)$ the subscheme of $\bM^\alpha(d,\chi)$ consisting of pairs $(s,F)$ where the sheaf $F$ has the spectrum $v$. Then one can also consider the wall-crossing behavior of $\bM_v^\alpha(d,\chi)$ for a fixed $v\colon$ the spectrum of a sheaf remains unchanged after the elementary modification of pairs \cite{kiryong}.

We also remark that strictly semistable sheaves exist in $\bM_v(d,\chi)$ if and only if $\alpha=0$ is a ``wall'' for $\bM_v^{\alpha}(d,\chi)$, in which case the forgetting morphism $\xi\colon\bM_v^{0^+}(d,\chi)\to \bM_v(d,\chi)$ is a projective bundle. (cf. \cite[Lemma 4.2.2]{choi}) Furthermore, if $h^0(F)=1$ for any $(s,F)\in \bM_v(d,\chi)$, then $\xi$ induces an isomorphism $\bM_v^{0^+}(d,\chi)\simeq \bM_v(d,\chi)$. We will use this fact repeatedly.
\end{rema}

\begin{prop}\label{prop:dsodd}
Suppose $d$ is odd and $0\le\chi<\frac{d}{2}$. Then $\ds$ is isomorphic to the subscheme of $\bB(d,\chi)$ consisting of pairs $(C,Z)$ where $C$ is a degree $d$ curve and $Z$ is colinear $\chi$ points on $C$.
\end{prop}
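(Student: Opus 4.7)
The strategy combines a twist with the wall-crossing framework of $\alpha$-stable pairs developed in \cite{kiryong}. Since $d$ is odd, $k := (d-3)/2$ is an integer, and the twist $F \mapsto F(-k)$ gives an isomorphism $\bM(d,\chi) \cong \bM(d,\chi'')$ with $\chi'' := \chi - d(d-3)/2$, carrying $\ds$ onto the deepest stratum $\ds[\chi''] \subset \bM(d,\chi'')$ with shifted spectrum $v'' = v - k$. Inspection of the explicit formula in the proof of Theorem \ref{maintheorem} shows $v''$ has a single nonnegative entry equal to $0$, so $h^0(F'') = 1$ for every $F'' \in \ds[\chi'']$; each such sheaf carries a canonical section up to scalar.

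Because $h^0(F'')=1$, the forgetting map $\xi\colon \bM^{0^+}(d,\chi'') \to \bM(d,\chi'')$ restricts to an isomorphism over $\ds[\chi'']$. Moreover, since elementary modifications at walls preserve the spectrum of the underlying sheaf, no wall meets the deepest stratum as $\alpha$ varies, hence $\bM^{0^+}_{v''}(d,\chi'') \cong \bM^{\infty}_{v''}(d,\chi'')$. The correspondence $\bM^{\infty}(d,\chi'') \cong \bB(d,n)$ with $n = \chi'' - 1 + g(d) = \chi$ (arranged precisely by the choice of $k$) then yields a morphism $\Phi\colon \ds \to \bB(d,\chi)$; it remains to identify its image with the colinear locus.

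The inverse to $\Phi$ is built explicitly. For $(C,Z) \in \bB(d,\chi)$ with $Z \subset L = V(\ell)$ colinear, let $Z'$ be the residual length-$(d-\chi)$ subscheme of $L \cap C$, and set $F'' := I_{Z',C}(1)$ with canonical section $\ell$. The sequence $0 \to \cO_C \to F'' \to \cO_Z \to 0$ induced by multiplication by $\ell$ reproduces the $\infty$-stable pair attached to $(C,Z)$ under the standard correspondence. The spectrum of $F''$ is $v''$: applying $\pi_*$ (projecting from a point outside $C \cup L$) to $0 \to \pi_* F'' \to \pi_* \cO_C(1) \to \pi_* \cO_{Z'}(1) \to 0$, the length-$(d-\chi)$ quotient is concentrated at the single point $\pi(L) \in \PP^1$ and drops degree $1$ in each of the first $d-\chi$ summands of $\pi_* \cO_C(1)$, leaving the remaining $\chi$ summands unchanged, which produces exactly the deepest spectrum $v''$. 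Twisting back by $\cO(k)$ yields the inverse, recovering Example \ref{example1} when $C$ is the $d$-fold thickening of $L$. For the reverse inclusion (image $\subseteq$ colinear locus), the identification $H^0(I_{Z',C}(1)) = H^0(I_{Z'}(1))$ (valid because $H^1(\cO_{\PP^2}(1-d))=0$) shows that $h^0(F(-k)) = 1$ forces $Z'$ to be colinear, and then $Z = L \cap C - Z'$ is colinear as well through the identification $I_{Z,C} = \cHom_C(F(-k), \cO_C)$.

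The main obstacle lies in the spectrum computation above when $C$ is non-reduced: the scheme-theoretic residual $Z'$ of $Z$ in $L \cap C$ must be interpreted carefully, and the analysis of $\pi_* \cO_{Z'}(1)$ requires a layer-by-layer examination of the multiplicity structure of $C$ along $L$, with the colinearity of $Z$ ensuring that the resulting splitting type on $\PP^1$ is precisely $v''$ rather than some more balanced alternative.
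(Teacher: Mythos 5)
Your overall skeleton matches the paper's: twist by $-\tfrac{d-3}{2}$ so that $\ds\simeq\ds[\chi']$ with $h^0=1$, pass to $\bM^{0^+}_{v'}(d,\chi')$, cross to $\bM^{\infty}_{v'}(d,\chi')\subset\bB(d,\chi)$, and identify the image with the colinear locus. However, two of your steps have genuine gaps. First, your justification for the absence of walls is a non-sequitur. The fact that elementary modifications preserve the spectrum only says that the stratification by spectra is compatible with wall-crossing (so one may speak of $\bM^\alpha_{v'}$ for each $\alpha$); it does not imply that no wall meets a given stratum --- at a wall, pairs in the stratum can be replaced by different pairs whose sheaves have the same spectrum, changing $\bM^\alpha_{v'}$ birationally. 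The paper instead lists all candidate splitting types $(1,(d-e)m+1-g(d-e)+z)+(0,em+\chi'-z-1+g(d-e))$ and computes the corresponding wall value $\alpha=(e-d)\bigl(\tfrac{d}{2}-\tfrac{\chi}{e}\bigr)-\tfrac{d}{e}z$, which is negative precisely because $\chi<\tfrac{d}{2}$. Your argument never uses the hypothesis $\chi<\tfrac{d}{2}$, which is a clear warning sign: for $d$ even and $\chi=d$ a wall does occur at $\alpha=0$ (as the paper notes after Proposition 3.15), so some numerical input is indispensable.

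Second, your identification of the image with the colinear locus misattributes which cohomological condition controls colinearity. The deepest spectrum is characterized by \emph{two} conditions after twisting: $h^0(F)=1$ (top of the spectrum, $a_1$ maximal with multiplicity one) \emph{and} $h^1(F(d-4))>0$ (bottom of the spectrum, $a_d$ minimal). The condition $h^0(F(-k))=1$ alone does not single out the deepest stratum --- e.g.\ for $d=7$, both $[0,-1,-2,-3,-3,-4,-5]$ and $[0,-1,-2,-3,-4,-4,-4]$ are balanced with the same sum and $h^0=1$ --- so it cannot "force'' colinearity. The paper's key step is Serre duality on the supporting curve: $H^1(F(d-4))^*=\Hom_C(F,\cO_C(1))=H^0(I_{Z,C}(1))=H^0(I_{Z,\PP^2}(1))$, so the $h^1$-condition is literally the statement that $Z$ lies on a line. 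This one-line duality also replaces your explicit computation of the splitting type of $\pi_*I_{Z',C}(1)$, which you correctly flag as problematic for non-reduced $C$ and leave unproved; that computation is not needed once the correspondence $I_{Z,C}=\mathcal{H}om_C(F,\cO_C)$ from the $\alpha=\infty$ description of stable pairs is combined with the duality above.
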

\begin{proof}
Let $v=[a_1,\cdots, a_d]$ be the corresponding deepest spectrum. From the condition, it is easy to check that $a_1=\frac{d-3}{2}$ and $a_1>a_2$. We also have $a_d=-\frac{d+1}{2}$ if $\chi=0$ and $a_d=-\frac{d-1}{2}$ if $\chi>0$. When $\chi=0$, the same reasoning as below shows that the sheaves in $\ds$ is of the form $\cO_C$ for some degree $d$ curve $C$. Hence $\ds[0]\simeq \bB(d,0)$ as required.

Assume $\chi>0$ so that $a_d=-\frac{d-1}{2}$. Let $\chi'= \chi-d(\frac{d-3}{2})$ and $v'=[a_1-\frac{d-3}{2},\cdots, a_d-\frac{d-3}{2}]$. Then by twisting by $-\frac{d-3}{2}$ we have
\[\ds\simeq \ds[\chi'] =\{F\in \bM(d,\chi')\colon h^0(F)=1 \text{ and } h^1(F(d-4))>0\}.  \]
Therefore, $\ds\simeq \bM_{v'}^{0^+}(d,\chi')$. We claim that no wall crossing is necessary between $\bM^{0^+}(d,\chi')$ and $\bM^{\infty}(d,\chi')$. Indeed, if there is a wall, the splitting type of wall is
\[(1, dm+\chi') = (1, (d-e)m + 1-g(d-e)+z) + (0, em + \chi' -z-1+ g(d-e)), \]
for an integer $1\le e<d$ and a nonnegative integer $z$.
Then corresponding stability parameter $\alpha$ is given by
\[ \frac{\chi'+\alpha}{d} = \frac{1}{e} (\chi' -z-1+ g(d-e)).  \]
After simplifying, we get
\[\alpha= (e-d)(\frac{d}{2}-\frac{\chi}{e}) - \frac{d}{e}z. \]
So, if $\chi<\frac{d}{2}$, $\alpha$ is negative for any $e$ and $z$. Hence, there is no wall and we have $\ds\simeq \bM_{v'}^{\infty}(d,\chi')$.

Under isomorphism $\bM^{\infty}(d,\chi')\cong\bB(d,\chi)$ described above, a stable pair $(s,F)$ in $\bM^{\infty}(d,\chi')$ corresponds to a pair of its support $C$ and a length $\chi$ subscheme $Z$ of $C$ whose structure sheaf is given by $\mathcal{E}xt^1(Q, \cO_C)$. We recall that $I_{Z,C}=\mathcal{H}om_C(F, \cO_C)$.

We now show that the condition $h^1(F(d-4))>0$ is equivalent to the condition $Z$ being colinear. By Serre duality on $C$, we have $$ H^1(F(d-4))^*= \Hom_C(F(d-4), \cO_C(d-3))=\Hom_C(F, \cO_C(1)). $$
Therefore, $h^1(F(d-4))>0$ if and only if
\[0\ne \Hom_C(F,\cO_C(1) )=H^0(I_{Z,C}(1))=H^0(I_{Z,\PP^2}(1)), \]
or equivalently, $Z$ is colinear.
\end{proof}

\begin{rema}
  For $\frac{d}{2}<\chi<d$, the same proof works except for the case $\chi=d-1$. If $\chi=d-1$, we have $a_1=a_2$ so that $h^0(F)=2$ for $F\in \bM(d,\chi')$ and hence $\bM_v^{0^+}(d,\chi')\simeq \bM_v^{\infty}(d,\chi')$ is  a $\PP^1$-bundle over $\ds$. Geometrically, this means a pair of $(d-1)$-tuples of colinear points is identified if $d$-th points determined by the line and the curve coincide. This is consistent with Proposition \ref{prop:dual}, that is, the subscheme of $\bB(d,\chi)$ having colinear points is isomorphic to the subscheme of $\bB(d,d-\chi)$ having colinear points unless $\chi= 1 \text{ or } d-1$.
\end{rema}

\begin{prop}\label{prop:dseven}
Suppose $d$ is even and $\frac{d}{2}\le \chi<d$. Then $\ds$ is isomorphic to the subscheme of $\bB(d,\chi-\frac{d}{2})$ consisting of pairs $(C,Z)$ where $C$ is a degree $d$ curve and $Z$ is colinear $\chi-\frac{d}{2}$ points on $C$.
\end{prop}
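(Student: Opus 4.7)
My plan is to parallel Proposition \ref{prop:dsodd}, adjusting the arithmetic for even $d$. Writing $\chi = d/2 + s$ with $0 \le s < d/2$, the formulas from the proof of Theorem \ref{maintheorem} give $k = a_1 = (d-2)/2$ and $r = s$, so the deepest spectrum $v$ has $a_1 = (d-2)/2 > a_2$ and $a_d \le -(d-2)/2$. After twisting by $\cO_{\PP^2}(-(d-2)/2)$ and setting $\chi' = \chi - d(d-2)/2$, we get $\ds \simeq \bM_{v'}(d, \chi')$, where $v'$ satisfies $a_1' = 0$ and $a_d' \le -(d-2)$. Every $F \in \bM_{v'}(d, \chi')$ thus has $h^0(F) = 1$, and as in the odd case the balanced property together with the Euler characteristic constraint gives
\[ \bM_{v'}(d, \chi') = \{F \in \bM(d, \chi') : h^0(F) = 1 \text{ and } h^1(F(d-4)) > 0\}. \]

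The next step is to show there is no wall crossing between $\bM_{v'}^{0^+}(d, \chi')$ and $\bM_{v'}^\infty(d, \chi')$. A potential wall splitting
\[ (1, dm + \chi') = (1, (d-e)m + 1 - g(d-e) + z) + (0, em + \chi' - z - 1 + g(d-e)) \]
with $1 \le e < d$ and $z \ge 0$, after substituting $\chi' = \chi - d(d-2)/2$ and simplifying, yields
\[ \alpha = \frac{d-e}{e}\left(\chi - \frac{d(e+1)}{2}\right) - \frac{d}{e} z. \]
Since $\chi < d \le d(e+1)/2$ for all $e \ge 1$, the first summand is strictly negative and so $\alpha < 0$; hence $\bM_{v'}^{0^+}(d, \chi') \simeq \bM_{v'}^\infty(d, \chi')$. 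Combined with $\bM_{v'}(d, \chi') \simeq \bM_{v'}^{0^+}(d, \chi')$ (from $h^0(F) = 1$), we obtain $\ds \simeq \bM_{v'}^\infty(d, \chi')$.

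Finally, under the isomorphism $\bM^\infty(d, \chi') \simeq \bB(d, n)$ with $n = \chi' - 1 + g(d) = \chi - d/2$, an $\infty$-stable pair $(s, F)$ corresponds to $(C, Z)$ with $I_{Z, C} = \mathcal{H}om_C(F, \cO_C)$. Serre duality on $C$ (using $\omega_C = \cO_C(d-3)$) gives
\[ H^1(F(d-4))^* \simeq \Hom_C(F, \cO_C(1)) = H^0(I_{Z, C}(1)) = H^0(I_{Z, \PP^2}(1)), \]
where the last equality uses the sequence $0 \to \cO_{\PP^2}(1-d) \to I_{Z, \PP^2}(1) \to I_{Z, C}(1) \to 0$ together with the vanishing $H^i(\cO_{\PP^2}(1-d)) = 0$ for $i = 0, 1$. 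So $h^1(F(d-4)) > 0$ if and only if $Z$ lies on a line, completing the identification with the claimed subscheme of $\bB(d, \chi - d/2)$.

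The main obstacle I anticipate is the wall-crossing bookkeeping, which is tight at the boundary $\chi = d$ in the $e=1$, $z=0$ case and makes essential use of the strict inequality $\chi < d$ in the hypothesis. The degenerate boundary $\chi = d/2$ (so $n = 0$ and $Z$ empty) needs to be handled separately, but reduces to the statement $\ds \simeq \bB(d, 0)$ via $F \simeq \cO_C$, which follows from Example \ref{lem:dualspectrum}(1).
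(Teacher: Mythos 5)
Your proposal follows essentially the same route as the paper: reduce to the argument of Proposition \ref{prop:dsodd} after twisting by $-(d-2)/2$, verify via the same splitting-type computation that $\alpha = \frac{d-e}{e}\bigl(\chi - \frac{d(e+1)}{2}\bigr) - \frac{d}{e}z < 0$ for $\chi < d$ so that no wall occurs, and then identify $\ds$ inside $\bB(d,\chi-\frac{d}{2})$ via the colinearity condition coming from $h^1(F(d-4))>0$. The arithmetic ($k=(d-2)/2$, $r=s$, $n=\chi-\frac{d}{2}$, the sign of $\alpha$) all checks out, and your extra detail on the Serre duality step and the boundary case $\chi=\frac{d}{2}$ only fills in what the paper delegates to the odd case.
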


\begin{proof}
The proof is parallel to Proposition \ref{prop:dsodd}. Note that the case $\chi=d$ is dropped, which we will show is the only case that we have to consider wall-crossing.

Let $v=[a_1,\cdots, a_d]$ be the corresponding deepest spectrum as before. Then we have $a_1=\frac{d-2}{2}$ and $a_1>a_2$. We also have $a_d=-\frac{d}{2}$ if $\chi=\frac{d}{2}$ and $a_d=-\frac{d-2}{2}$ if $\chi>\frac{d}{2}$.

%

The same argument as in Proposition \ref{prop:dsodd} works with $\chi'=\chi-d(\frac{d-2}{2})$. We only have to check that no wall-crossing occurs. 
Similarly as before if there is a wall, the splitting type of the wall is
\[(1, dm+\chi') = (1, (d-e)m+ 1-g(d-e)+z) + (0, em + \chi' -z-1+ g(d-e)), \]
for an integer $1\le e<d$ and a nonnegative integer $z$.
Then corresponding stability parameter $\alpha$ is given by
\[ \frac{\chi'+\alpha}{d} = \frac{1}{e} (\chi' -z-1+ g(d-e)).  \]
After simplifying, we get
\[\alpha= (e-d)(d(\frac{e+1}{2e})-\frac{\chi}{e}) - \frac{d}{e}z. \]
Since $1\le e<d$ and $\chi<d$, $\alpha$ is negative. Hence, there is no wall.
\end{proof}

\begin{rema}
By direct computation of the dimension, one can easily check that the codimension of $\ds$ in $\bM(d, \chi)$ is
$\frac{d^2-3d-2}{2}$, which is the  largest one among all spectra. (cf. \cite[Proposition 3.14]{lepot1})
\end{rema}


Note that in the proof of Proposition \ref{prop:dseven}, we have a wall $\alpha=0$ when $\chi=d$, $e=1$, and $z=0$. This is because of the presence of strictly semistable sheaves.

\begin{coro}\label{cor:even0}
  Let $0\le \chi<d$. Unless $d$ is even and $\chi=0$, there are no strictly semistable sheaves in $\ds$. If $d$ is even and $\chi=0$, the locus $\ds[0]^s$ of stable sheaves in $\ds[0]$ is isomorphic to the subscheme of relative Hilbert scheme $\bB(d,\frac{d}{2})$ consisting of $(C,Z)$ where $Z$ is colinear and the line containing $Z$ is not a component of $C$.
\end{coro}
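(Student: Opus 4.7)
The plan is to treat the two assertions through successive applications of the wall-crossing machinery of Propositions \ref{prop:dsodd} and \ref{prop:dseven}, isolating the exception via the twist duality of Proposition \ref{prop:dual}. Throughout I use the equivalence recorded in the preceding remark: strictly semistable sheaves exist in $\bM_v(d,\chi)$ if and only if $\alpha=0$ is a wall for $\bM^\alpha_v(d,\chi)$.

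For the first assertion, Proposition \ref{prop:dual}(2) swaps every odd-$d$ case with $d/2<\chi<d$ into the range of Proposition \ref{prop:dsodd}, and every even-$d$ case with $1\le\chi<d/2$ into the range of Proposition \ref{prop:dseven}. In each of these ranges the wall formula derived in the two proofs gives $\alpha<0$ strictly for every integer splitting $1\le e<d$, $z\ge 0$. Hence $\alpha=0$ is not a wall, and no strictly semistable sheaves occur in $\ds$.

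For the exceptional case $(d\text{ even},\chi=0)$, Proposition \ref{prop:dual}(1) yields $\ds[0]\simeq\ds[d]$ via $F\mapsto F(1)$, so it suffices to describe $\ds[d]^s$. Re-running the wall-crossing calculation of Proposition \ref{prop:dseven} at $\chi=d$ gives $\alpha=\tfrac{d}{e}\bigl(\tfrac{(e-d)(e-1)}{2}-z\bigr)$, which is non-positive for $1\le e<d,\,z\ge 0$ and vanishes exactly at $(e,z)=(1,0)$. So there is a unique wall at $\alpha=0$ (producing strictly semistable sheaves) and no positive walls; hence $\bM^{0^+}_{v'}(d,\chi')\simeq\bM^\infty_{v'}(d,\chi')$, which by $n=\chi'-1+g(d)=d/2$ and the Serre-duality argument of Proposition \ref{prop:dsodd} identifies with the subscheme of $\bB(d,d/2)$ of pairs $(C,Z)$ with $Z$ colinear on $C$. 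Since $h^0(F')=1$ on the stable locus, the forgetful map $\xi$ is an isomorphism there, so $\ds[d]^s$ is obtained by removing the pairs coming from strictly semistable sheaves.

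It remains to identify those pairs. Reading off the $(e,z)=(1,0)$ splitting, the strictly semistable S-equivalence classes in $\ds[d]$ are those of $\cO_L\oplus\cO_{C''}\bigl(\tfrac{d-2}{2}\bigr)$ with $L$ a line and $C''$ a degree $d-1$ curve, and under the pair-to-Hilbert dictionary these are precisely the $(C,Z)$ with $C=L\cup C''$ and $Z\subset L$, i.e., those for which $L$ is a component of $C$. Twisting by $\opt(-1)$ then transports the description of $\ds[d]^s$ to $\ds[0]^s$, giving the claim. The main technical obstacle is to verify the ``only if'' direction: for $(C,Z)$ with $L\not\subset C$, one must rule out all possible slope-matching subsheaves of $F'=\mathcal{H}om_C(I_{Z,C},\cO_C)$. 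I would address this by constraining any destabilizing subsheaf via the deepest-spectrum hypothesis (Lemma \ref{lem:maxspectrum}): a Jordan--H\"older filtration of $F'$ must have summands whose individual deepest spectra unite to the deepest spectrum of $\ds[d]$, and a direct combinatorial enumeration along the lines of Example \ref{ex:ds} shows that this forces the decomposition $\cO_L\oplus\cO_{C''}\bigl(\tfrac{d-2}{2}\bigr)$, and in particular $L\subset C$.
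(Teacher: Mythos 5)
Your argument follows the paper's proof of this corollary essentially step for step: both rest on the equivalence between the existence of strictly semistable sheaves in $\bM_v(d,\chi)$ and $\alpha=0$ being a wall for $\bM_v^{\alpha}(d,\chi)$, on the wall enumerations carried out in the proofs of Propositions \ref{prop:dsodd} and \ref{prop:dseven} (your explicit reduction of the remaining ranges of $\chi$ via Proposition \ref{prop:dual} is a welcome precision that the paper leaves implicit), and on the observation that at $\chi=d$ the unique splitting realizing $\alpha=0$ is $(e,z)=(1,0)$. Your simplification $\alpha=\tfrac{d}{e}\bigl(\tfrac{(e-d)(e-1)}{2}-z\bigr)$ and your identification of the strictly semistable S-equivalence classes as $\cO_L\oplus\cO_{C''}\bigl(\tfrac{d-2}{2}\bigr)$ are both correct.

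The only substantive criticism concerns your final paragraph. First, the ``only if'' direction you flag as the main technical obstacle is not actually an open gap: the wall computation is an enumeration of \emph{all} admissible splitting types of a strictly $0$-semistable pair, so once $(e,z)=(1,0)$ is known to be the only splitting with $\alpha=0$, any strictly semistable sheaf in $\ds[d]$ forces the pair to split as $(1,\cO_{C''})$ plus a degree-one piece; the section then lands in $\cO_{C''}$ and its cokernel $Q$ (hence $Z$) is supported on the residual line, which must therefore be a component of $C$. The contrapositive gives stability whenever $L_Z\not\subset C$, which is exactly what the paper's one-sentence conclusion records. Second, the supplementary argument you propose to close this (non-)gap is unsound as stated: the spectra of the Jordan--H\"older factors do not ``unite'' to the spectrum of $F$, because applying the exact functor $\pi_*$ to a filtration of $F$ produces an extension of vector bundles on $\PP^1$, which need not split; hence the spectrum of $F$ is not the concatenation of the spectra of the graded pieces, and a combinatorial enumeration of factor spectra along the lines of Example \ref{ex:ds} cannot by itself pin down the destabilizing subsheaf. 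Fortunately that step can simply be deleted; the remainder of your argument stands and agrees with the paper.
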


\begin{proof}
   The strictly semistable sheaves exist if and only if $\alpha=0$ becomes a wall. In the proof of Proposition \ref{prop:dsodd} and \ref{prop:dseven}, we have a wall at $\alpha=0$ only when $d$ is even and $\chi=0$. Since $e=1$ and $z=0$ is the only possibility, a pair $(s,F)$ in $\ds[0]$ is strictly semistable only if its support is a union of degree $(d-1)$ curve $C'$ and a line $L$ and the section $s$ is taken from the structure sheaf $\cO_{C'}$, so that the cokernel of $s$ is supported on $L$.
\end{proof}
This corollary generalizes the description of the deepest stratum of $\bM(4,0)$ and $\bM(6,0)$ in \cite{maican, maican2}.
\bibliographystyle{amsplain}

\end{document}